\newcounter{foo}
\def\thm@space@setup{%
  \thm@preskip=\parskip \thm@postskip=0pt
}
\newcommand{\dotDelta}{{\vphantom{\triangle}\mathpalette\d@tD@lta\relax}}
\newcommand{\d@tD@lta}[2]{%
  \ooalign{\hidewidth$\m@th#1\mkern-1mu\cdot$\hidewidth\cr$\m@th#1\triangle$\cr}%
}
\renewenvironment{proof}[1][\proofname]{\par
  \vspace{-\topsep}
  \pushQED{\qed}%
  \normalfont
  \topsep8pt \partopsep0pt 
  \trivlist
  \item[\hskip\labelsep
        \itshape
    #1\@addpunct{.}]\ignorespaces
}{%
  \popQED\endtrivlist\@endpefalse
  \addvspace{6pt plus 6pt} 
}
\def\thm@space@setup{%
  \thm@preskip=0.3cm
  \thm@postskip=0cm
}
\declaretheoremstyle[%
  spaceabove=6pt,%
  spacebelow=6pt,%
  headfont=\normalfont\itshape,%
  postheadspace=1em,%
  qed=\qedsymbol%
]{mystyle}
\def\qed{\hfill\ifhmode\unskip\nobreak\fi\quad\ifmmode\Box\else\hfill$\Box$\fi}
\def\ite#1{\hfill\break${}$\hbox to 50pt {\quad(#1)\hfill}}
\newtheorem{thm}{Theorem}
\newtheorem{defn}{Construction}
\newtheorem{lemma}{Lemma}[section]
\newtheorem{conj}[foo]{Conjecture}
\newtheorem{claim}{Claim}[section]
\newtheorem{problem}{Problem}[section]
\def\ex{{\rm{ex}}}
\tikzstyle{vertex}=[circle,fill=black,inner sep=2pt]
\tikzstyle{vertrect}=[draw,rectangle,inner sep=2pt]
\tikzstyle{vertdia}=[draw,diamond,inner sep=2pt]
\newcommand{\TT}{\mathcal{T}}
\newcommand{\h}{H}
\newcommand{\tria}[1]{ \triangle ( #1) }
\newcommand{\lina}[1]{ \ell ( #1) }
\newcommand{\de}[1]{ \, {\mathrm{del}} \, ( #1) }
\newcommand{\cex}{\text{\rm ex}_\circlearrowright}
\titleformat{\subsection}[runin]
{\normalfont\bfseries}{\thesubsection}{1em}{}
\begin{document}

\pagestyle{myheadings}
\markright{{\small{\sc F\"uredi,  Mubayi, O'Neill, and Verstra\"ete: Extremal problems for pairs of triangles}}}

\title{Extremal problems for pairs of triangles}

\author{Zolt\'an F\"{u}redi\footnote{Alfr\'ed R\'enyi Institute of Mathematics,
Hungarian Academy of Sciences, P.O. Box 127, Budapest, Hungary, H-1364. Research was supported in part by
NKFIH grant KH130371 and NKFI--133819. E-mail: z-furedi@illinois.edu}
\and
	Dhruv Mubayi\footnote{Department of Mathematics, Statistics, and Computer Science, University of Illinois at Chicago.
Research supported by NSF award DMS-1952767. E-mail: mubayi@uic.edu}
	\and
	Jason O'Neill \footnote{Department of Mathematics, University of California, San Diego. Research supported by NSF award DMS-1800332. E-mail: jmoneill@ucsd.edu and jacques@ucsd.edu \newline\indent
{\it 2020 Mathematics Subject Classifications:}
05D05; 05C65; 52C45.\newline\indent
{\it Key Words}:  ordered triple systems,  extremal hypergraphs, intersecting planar triangle systems.}
 \and
	Jacques Verstra\"{e}te \footnotemark[3]
}

\maketitle

\begin{abstract}
A {\em convex geometric hypergraph} or cgh consists of a family of subsets of a strictly convex set of points in the plane.
There are eight pairwise nonisomorphic cgh's consisting of two disjoint triples.
These were studied at length by Bra{\ss}~\cite {Brass1} (2004) and by Aronov, Dujmovi\'c, Morin, Ooms, and da Silveira~\cite{ADMOS} (2019).
We determine the extremal functions exactly for seven of the eight configurations.

The above results are about cyclically ordered hypergraphs.
We extend some of them for triangle systems with vertices from a non-convex set.
We also solve problems posed by P.~Frankl, Holmsen and Kupavskii~\cite{FHK} (2020),
 in particular, we determine the exact maximum size of an intersecting family
of triangles whose vertices come from a set of $n$ points in the plane.
\end{abstract}

\section{Introduction}\label{sec_intro}

A {\em triangle system} is a pair $(P,\mathcal{T})$ where $P$ is a set of points in the plane in {\em general position}, i.e., no three collinear, and $\mathcal{T}$ is a set of  triangles with vertices from $P$. (A triangle is a closed set, the convex hull of three points not on a line). A {\em convex triangle system} is a triangle system $(P,\mathcal{T})$ where the elements of $P$ are in strictly convex position. It is convenient to treat $P$ in this case as the vertex set $\Omega_n$ of a regular $n$-gon in the plane, and to consider $\mathcal{T}$ to be a {\em convex geometric hypergraph} or {\em cgh} -- the vertex set is $\Omega_n$ with the clockwise cyclic ordering, and $\mathcal{T}$ is a set of triples from $\Omega_n$ called {\em edges} corresponding to the triples of vertices forming triangles. In this language, a cgh $\mathcal{S}$ is {\em contained} in a cgh $\mathcal{T}$ if there is an injection from the vertex set of $\mathcal{S}$ to the vertex set of $\mathcal{T}$ preserving the cyclic ordering of the vertices and preserving edges, and we say that a cgh $\h$ is {\em  $F$-free} if $\h$ does not contain $F$. In this paper, we concentrate on extremal problems for pairs of triangles in triangle systems and convex geometric hypergraphs. For the rich history of ordered and convex geometric graph problems and their applications, see \cite{CP,FKMV1,HP,KeP,Kup,KuP,PP} and the surveys of Pach~\cite{P1,P2} and Tardos~\cite{T}, and for convex triangle systems and generalizations, see \cite{Brass2,FJKMV2,PT} and the survey of Bra{\ss}~\cite{Brass1}.
On the other hand, the field of extremal hypergraph problems in the convex or geometric setting has fewer results, and statements of general principles in the area are lacking.
A natural first step in building such a theory is to solve interesting special cases, and this is one of the goals of this paper.

\subsection{Intersecting triangle systems}

An old theorem of Hopf and Pannwitz~\cite{HP} and Sutherland~\cite{S} states that the maximum number of line segments between $n$ points in the plane
with no two line segments disjoint is $n$. It is natural to ask for the maximum number of triangles between $n$ points in the plane with no two triangles
disjoint. To this end, a triangle system $(P,\mathcal{T})$ is {\em intersecting} if any two triangles in $\mathcal{T}$ share at least one point, and
{\em strongly intersecting} if any two triangles in $\mathcal{T}$ share a point in their interior. Intersecting triangle systems are motivated by the Erd\H{o}s-Ko-Rado Theorem~\cite{EKR}, and motivation for considering strong intersection is the
well-known theorem of Boros and the first author~\cite{ZF33} concerning the {\em depth} of points. They proved that for every set of $n$ points in the plane, the complete triangle system contains $\frac{2}{9}{n \choose 3}$ triangles with a common point in their interior (see also Bukh~\cite{Bukh}, Bukh, Matou\v{s}ek and Nivasch~\cite{BMN}, and B\'{a}r\'{a}ny~\cite{Barany}, Gromov~\cite{Gromov} and Karasev~\cite{Karasev} for the $d$-dimensional analogue). In particular, a strongly intersecting subfamily
of size at least $\frac{2}{9}{n \choose 3}$ exists. P.~Frankl, Holmsen and Kupavskii~\cite{FHK} recently determined that the maximum number of triangles in an $n$-point strongly intersecting convex triangle system is
$$\dotDelta(n)= \left\{\begin{array}{ll}
\displaystyle{\frac{n(n - 1)(n + 1)}{24}}  & \hbox{ if $n$ is odd}  \\[8pt]
\displaystyle{\frac{n(n - 2)(n + 2)}{24}} & \hbox{  if $n$ is even.}\end{array}\right.
$$
In particular, $\dotDelta(n)/{n \choose 3} \rightarrow 1/4$ as $n \rightarrow \infty$. The quantity $\dotDelta(n)$ also defines the maximum {\em depth} of a point in sets of $n$ points in the plane, which can be proved using the {\em upper bound theorem} for convex polytopes -- see Wagner and Welzl~\cite{WW}.
An $n$-point strongly intersecting convex triangle system of size $\dotDelta(n)$
is obtained by taking all triangles containing the centroid of $\Omega_n$ when $n$ is odd, together with all triangles on one side of each diameter of
$\Omega_n$ when $n$ is even (these constructions have size $\dotDelta(n)$, see~\cite{ZF1} for instance). For convenience, we let $\mathcal{H}^{\star}(n)$ denote the family of all such convex triangle systems with $n$ points.  P.~Frankl, Holmsen and Kupavskii posed the following problem (see Problem~1 in~\cite{FHK}):

\begin{problem} \label{maxd1}
What is the maximum size, over all point sets of size $n$, of the largest strongly intersecting triangle system? Is the maximum
always at most $\left(\frac{1}{4} + o(1)\right){n \choose 3}$ as $n \rightarrow \infty?$
\end{problem}

Our first result solves this problem completely for point sets in general position, as follows:

\begin{thm} \label{thm:d1}
Any $n$-point strongly intersecting triangle system has size at most $\dotDelta(n)$.
\end{thm}

The short proof of Theorem \ref{thm:d1} is given in Section \ref{sec:proofofthmd1}.
Note that Theorem~\ref{thm:d1} sharpens and extends the main result of~\cite{FHK} cited above, as $\dotDelta(n)$ is exactly the size of
every convex triangle system in $\mathcal{H}^{\star}(n)$. P.~Frankl, Holmsen and Kupavskii further posed the problem of determining the maximum size of an $n$-point intersecting convex triangle system if one allows triangles to intersect on the boundary (see Problem 2 in~\cite{FHK}):

\begin{problem}\label{problem}
What happens if one relaxes the intersecting condition and allows triangles to intersect on the boundary?
\end{problem}

There are a number of different intersection patterns of pairs of triangles in convex triangle systems, depicted below:

\begin{figure}[H]
\centering
\includegraphics[scale=0.6]{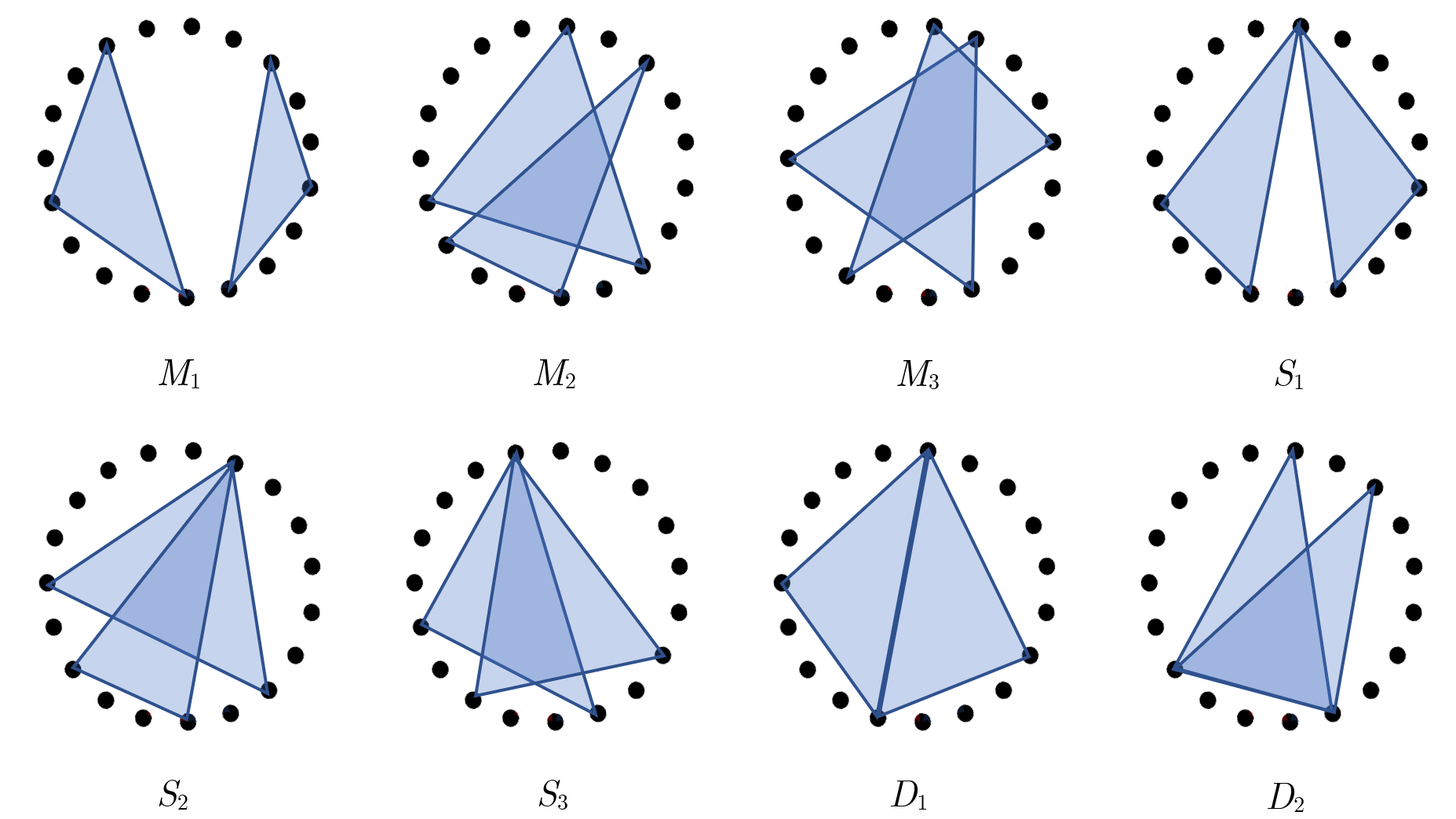}
\caption{The eight types of triangle pairs in convex triangle systems}
\label{allconfig}
\end{figure}

For all of these configurations, Bra{\ss}~\cite{Brass1} has shown the extremal function for convex triangle systems is either $\Theta(n^2)$ or $\Theta(n^3)$; the latter arises precisely when the two triangles have no common interior point. Aronov, Dujmovi\'c, Morin, Ooms and da Silveira~\cite{ADMOS} extensively studied cghs which avoid combinations of the configurations in Figure \ref{allconfig}, and determined many of the order of magnitudes of the associated extremal numbers.
An intersecting convex triangle system is precisely a convex triangle system not containing $M_1$, and a strongly intersecting convex triangle system is precisely a convex triangle system containing none of $M_1,D_1$ and $S_1$. If ${\cal F}$ is a set of convex triangle systems, then we denote by $\cex(n,{\cal F})$ the maximum size of a convex triangle system not containing any member of ${\cal F}$. In this language, P.~Frankl, Holmsen and Kupavskii~\cite{FHK} proved
$\cex(n,\{D_1,M_1,S_1\}) = \dotDelta(n)$. Problem \ref{problem} asks for $\cex(n,{\cal F})$ where ${\cal F} \subseteq \{M_1,D_1,S_1\}$ and we completely solve this problem using the following theorem:

\begin{thm}\label{thm:all}
For all $n \ge 3$,
\begin{eqnarray*}
\cex(n,F) &=& \left\{\begin{array}{ll}
\dotDelta(n) & \mbox{ if }F = D_1 \\
\dotDelta(n) + \lfloor\frac{n}{2}\rfloor
\lfloor\frac{n-2}{2}\rfloor & \mbox{ if }F = S_1 \\
\dotDelta(n) + \frac{n(n-3)}{2} & \mbox{ if }F = M_1. \end{array}\right.
\end{eqnarray*} %
\end{thm}

Furthermore, the extremal constructions for this theorem are classified -- see the constructions in Section \ref{sec:constructions}.  Using Theorem \ref{thm:all}, we obtain the exact value of $\cex(n,{\cal F})$ for each ${\cal F} \subseteq \{M_1,S_1,D_1\}$:
$$\cex(n, {\cal F}) = \min_{F \in {\cal F}}  \, \cex(n, F).$$
The extremal constructions above are characterized in our proofs in all cases except ${\cal F}=\{D_1, M_1\}$.

We also answer Problem~\ref{problem} in the more general context of triangle systems.
In this setting, $D_1$ denotes two triangles on opposite sides of a line and sharing a side -- tangent triangles -- and
$S_1$ denotes two triangles intersecting in exactly one vertex -- touching triangles -- whereas $M_1$ denotes two triangles sharing no points -- separated triangles.
Theorem \ref{thm:d1} as well as the first two parts of Theorem \ref{thm:all} are an immediate consequence of the following stronger theorem:

 \begin{thm}\label{thm:allplanar}
 Let $F \in \{M_1,D_1,S_1\}$, and let $\mathcal{T}$ be an $n$-point triangle system of maximum size not containing $F$. Then
 \[ |\mathcal{T}| = \left\{\begin{array}{ll}
\dotDelta(n) & \mbox{ if } F = D_1 \\
\dotDelta(n) + \lfloor\frac{n}{2}\rfloor
\lfloor\frac{n-2}{2}\rfloor & \mbox{ if }F = S_1 \\
\dotDelta(n) + \Theta(n^2) & \mbox{ if }F = M_1.\end{array}\right.\]
\end{thm}

The additive term of order $n^2$ for the case of $M_1$ in Theorem \ref{thm:allplanar} arises from a geometric theorem of Valtr on {\em avoiding line segments} in the plane.
We believe that the value of $\cex(n,M_1)$ should determine the maximum for $n$-point intersecting triangle systems:

\begin{conj}\label{conj:m1}
For all $n \geq 3$, if $\mathcal{T}$ is an $n$-point intersecting triangle system, then $|\mathcal{T}| \leq \cex(n,M_1)$.
\end{conj}

For the above configurations $F\in \{ D_1, S_1, M_1\}$,
 the extremal functions $\cex(n,F)$ and for the planer triangle systems, $\ex(P,F)$, are equal
(almost equal).
This is quite exceptional, for most configurations $F$ the non-convex case is much more complex.
E.g., one can find a self-intersecting path $P_3$ of length three
  in a convex geometric graph with $\Omega(n)$ edges, while for the general
not necessarily convex case Pach, Pinchasi, Tardos, and T\'oth~\cite{PPTT} showed that $\max \ex(P_3,F)=\Omega( n \log n)$.

\subsection{The five configurations in the $\Theta(n^2)$ range}~\label{sec:results5}

Bra{\ss}~\cite{Brass1} has shown that the five configurations whose extremal function is in the $\Theta(n^2)$ range are $S_2,S_3,M_2,M_3$ and $D_2$.
 In this section, we determine $\cex(n,F)$ exactly for $F \in \{S_3,M_2,M_3\}$ and give bounds for $F \in \{S_2,D_2\}$. The extremal function for $M_3$ was determined exactly in \cite{FJKMV1}. We also determine the exact extremal function for $M_2$ and $S_3$ when $n$ is even:

\begin{thm}\label{thm:m3m2s3}
\[
 \cex(n,F) = \left\{\begin{array}{ll}
 {n \choose 3} - {n - 3 \choose 3} & \mbox{ if }F = M_3 \mbox{ and }n \geq 3. \\
 {n \choose 2} - 2 & \mbox{ if }F = M_2 \mbox{ and }n \geq 7.\\
 \frac{n(n-2)}{2} & \mbox{ if }F = S_3\mbox{ and }n \geq 4\mbox{ is even}.
 \end{array}\right.\]
\end{thm}

For $S_3$ when $n$ is odd,  there are several constructions which obtain the lower bound $\cex(n, S_3)\ge \frac{(n-1)(n-2)}{2}+1$ (see Construction~\ref{s3free} in Section~\ref{sec:constructions}), but we have not proved that this bound is sharp. We leave the following open problem:

\begin{problem}\label{prob:s3}
Prove $\cex(n,S_3) = (n - 1)(n - 2)/2 + 1$ when $n\geq 5$ is odd, and characterize the extremal $S_3$-free convex geometric hypergraphs.
\end{problem}

The configurations  $S_2$ and $D_2$ appear to be the most difficult to handle. 

\begin{thm}\label{thm:configs2}
For $n \geq 3$,
\[ \Bigl\lfloor\frac{n^2}{4}\Big\rfloor - 1 \le \cex(n,S_2) \leq \frac{23}{64}n^2.\]
\end{thm}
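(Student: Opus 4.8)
The plan is to give an explicit $S_2$-free cgh of size $\lfloor n^2/4\rfloor-1$. Split $\Omega_n$ into two cyclically consecutive arcs $A=\{v_0,\dots,v_{a-1}\}$ and $B=\{v_a,\dots,v_{n-1}\}$ with $a=|A|$, $b=|B|=n-a$, $b\ge a$, and $b-a\in\{1,2\}$ according to the parity of $n$. Let
\[
 H=\bigl\{\{v_\ell,v_j,v_{j+1}\}:v_\ell\in A,\ \{v_j,v_{j+1}\}\subseteq B\bigr\}\cup\bigl\{\{v_j,v_{j+1},v_{j+2}\}:\{v_j,v_{j+1},v_{j+2}\}\subseteq B\bigr\},
\]
the triples made of one point of $A$ and a polygon-consecutive pair inside $B$, together with the triples of three polygon-consecutive points of $B$. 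Then $|H|=a(b-1)+(b-2)=\tfrac{n^2}{4}-\tfrac{(b-a-2)^2}{4}-1$, which equals $\lfloor n^2/4\rfloor-1$ for the chosen $a,b$. For $S_2$-freeness I would use the criterion that a cgh avoids $S_2$ iff, for every vertex $v$, the intervals recording the other two vertices of each edge through $v$ in the clockwise order from $v$ form a family with no member properly nested inside another (all four endpoints distinct). For $v\in A$ these intervals have unit length and lie inside the single block $B$ occupies in that order; for $v\in B$ each interval has an endpoint at the first or the last position of that order, and two intervals each meeting $\{\text{first},\text{last}\}$ are never properly nested. Hence $\cex(n,S_2)\ge\lfloor n^2/4\rfloor-1$.

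\textbf{Upper bound, easy estimate.} Encode an edge $e=\{v_i,v_j,v_k\}$ by its arc-lengths $(\alpha,\beta,\gamma)$, $\alpha+\beta+\gamma=n$; in the link of $v_i$ it is the interval with endpoints at positions $\alpha$ and $\alpha+\beta$ of the clockwise order from $v_i$. The criterion above forces $e\mapsto 2\alpha+\beta$ to be injective on the edges through $v_i$, and $2\alpha+\beta\in\{3,\dots,2n-3\}$, so every vertex has degree at most $2n-5$ and $|H|\le\tfrac13\sum_v d(v)\le\tfrac{n(2n-5)}{3}=\tfrac23n^2+O(n)$.

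\textbf{Upper bound, improvement and the obstacle.} Replacing $2/3$ by $23/64$ is the heart of the matter, and the injectivity of $e\mapsto 2\alpha+\beta$ alone is far too weak. The extra structure is that the link at $v$ is a monotone staircase — sorting its intervals by left endpoint makes the right endpoints non-decreasing as well — and that the three values $2\alpha+\beta$, $2\beta+\gamma$, $2\gamma+\alpha$ attached to a single edge at its three vertices always sum to $3n$. The plan would be: fix a threshold $\varepsilon$ and separate edges into balanced ones (all three arcs of length $\ge\varepsilon n$) and unbalanced ones (some arc shorter than $\varepsilon n$); bound the unbalanced edges by a codegree argument, since such an edge is essentially a pair with $O(\varepsilon n)$ choices of third vertex and $S_2$-freeness caps how many can meet at a common vertex; bound the balanced edges by showing that the staircase structure, forced on all $n$ links simultaneously, leaves a fixed fraction of the $2n-5$ slots empty at each vertex; then optimize $\varepsilon$ and the weights in the charging scheme. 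The main obstacle is precisely this quantitative use of the staircase across overlapping links — equivalently a delicate codegree analysis — balanced against the unbalanced-edge bound; squeezing out the exact constant $23/64$ (rather than something between $23/64$ and $2/3$) is where the hard case analysis lives, while the lower-bound count, the trivial degree bound, and the small-$n$ bookkeeping absorbed into $O(n)$ are routine by comparison.
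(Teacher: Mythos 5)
Your lower bound is fine: it is the same construction as the paper's (all triples with one vertex in $A$ and a consecutive pair in $B$, plus consecutive triples in $B$), the count $(a+1)(b-1)-1=\lfloor n^2/4\rfloor-1$ is correct, and your verification via the link criterion (two edges form $S_2$ iff they share exactly one vertex and, in the linear order obtained by cutting the circle at that vertex, their two intervals are properly nested with all four endpoints distinct) is a valid way to check $S_2$-freeness. Your "easy estimate" is also correct as far as it goes: same-midpoint intervals at a vertex would be nested, so $e\mapsto 2\alpha+\beta$ is injective on the link, giving $d(v)\le 2n-5$ and $|H|\le \tfrac{2}{3}n^2+O(n)$.

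However, the theorem claims $\cex(n,S_2)\le \tfrac{23}{64}n^2+O(n)$, and this is exactly the part you do not prove. Your final section is a plan, not an argument: the assertions that unbalanced edges can be bounded by "a codegree argument," that the staircase structure "leaves a fixed fraction of the $2n-5$ slots empty at each vertex," and that optimizing a charging scheme yields $23/64$ are all unsubstantiated, and nothing in the sketch pins down any constant below $2/3$. You yourself flag that "the hard case analysis" is missing — but that analysis is the content of the upper bound. For comparison, the paper gets $23/64$ by a different mechanism: call a pair $\{v_i,v_j\}$ of an edge $\{v_i,v_j,v_k\}$ good if no edge $\{v_i,v_j,v_{k'}\}$ has $v_{k'}$ strictly between $v_j$ and $v_k$; every edge has at least two good pairs and every shadow pair is good for at most two edges, so $2|H|\le 2R+B$ where $R,B$ count pairs good for two resp.\ one edge. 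A separate lemma shows every triangle of red pairs is an edge of $H$, so the red graph has at most $\binom{n}{2}$ triangles and hence $R\le n^2/4+O(n)$ (Lov\'asz--Simonovits), already giving $|H|\le \tfrac{3}{8}n^2+O(n)$; a final refinement decomposes $K_n$ into planar matchings $M_i$, picks one with many red pairs, and associates to each pair of red pairs of $M_i$ a pair $\{y,z\}$ that lies in no edge, which subtracts $\binom{R/n}{2}$ from $B$ and yields $23/64$. To complete your proposal you would need either to execute your $\varepsilon$-threshold scheme with explicit constants (and there is no evidence it reaches $23/64$), or to supply an argument of the above kind.
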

We believe that the lower bound in this theorem is tight.

\begin{conj}\label{conj:config5} For all $n \geq 5$, $\cex(n,S_2) = \lfloor n^2/4 \rfloor - 1$.
\end{conj}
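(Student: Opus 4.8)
The lower bound is exactly the $A/B$ construction described in the text, so I would put all the weight on the matching upper bound $\cex(n,S_2)\le \lfloor n^2/4\rfloor-1$. The first step is to recast $S_2$-freeness as a local condition on links. Write $L_v$ for the link of a vertex $v$, i.e.\ the set of chords $\{a,b\}$ on the remaining $n-1$ points with $\{v,a,b\}\in H$. Reading the five points of an $S_2$ in cyclic order, one checks that $S_2$ is the \emph{nested} star, so a copy of $S_2$ with apex $v$ is precisely a pair of vertex-disjoint chords of $L_v$ in nesting position: $a<b<c<d$ on the arc avoiding $v$ with $\{a,d\},\{b,c\}\in L_v$. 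Hence $H$ is $S_2$-free iff every link avoids two disjoint nested chords. A short exchange/conflict-graph argument shows the maximum size of such a chord family on $m$ points is $2m-3$ (I verified $m=4,5$ by hand, and the general bound follows by the same antichain reasoning that gives the non-crossing case). Thus $|L_v|\le 2n-5$ and $3|H|=\sum_v |L_v|\le n(2n-5)$, the crude bound $|H|\le (2n^2-5n)/3$. Sharpening this by controlling, for each near-consecutive pair, how many links a fixed chord can sit in near-maximally is what produces the paper's $\tfrac{23}{64}n^2+O(n)$; the whole point is to drive the same idea down to $\tfrac14 n^2$.

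The route I would take to the exact value is induction on $n$ keyed to the telescoping identity $\lfloor n^2/4\rfloor-\lfloor (n-1)^2/4\rfloor=\lfloor n/2\rfloor$. Deleting a single vertex $v$ leaves an $S_2$-free cgh on $n-1$ points, so $|H|\le \cex(n-1,S_2)+\deg(v)$ with $\deg(v)=|L_v|$, and the induction closes as soon as one can locate a vertex with $\deg(v)\le \lfloor n/2\rfloor$. This target is exactly calibrated: in the extremal $A/B$ construction the vertices of $A$ have degree $|B|-1=\lfloor n/2\rfloor$ while the vertices of $B$ have degree about $n$, so the minimum degree sits well below the average $3|H|/n\approx \tfrac34 n$. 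The heart of the argument is therefore a \textbf{min-degree lemma}: every $S_2$-free cgh of size within $o(n^2)$ of the extremal bound has a vertex of degree at most $\lfloor n/2\rfloor$. Granting this, I would settle the small base cases $5\le n\le n_0$ by direct (or computer-assisted) verification, carrying the additive $-1$, and then run the induction, which yields $|H|\le \cex(n-1,S_2)+\lfloor n/2\rfloor=\lfloor n^2/4\rfloor-1$ and, tracking the equality cases, the characterization of extremal families.

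The main obstacle is precisely the min-degree lemma, and it cannot be local: the per-apex condition permits links as large as $2n-5\approx 2n$, so nothing about a single vertex forces small degree. One must instead exploit interaction between links. The mechanism I would try to quantify is that a near-maximal link $L_v$ is an antichain of chords behaving like a triangulation, and each such chord, viewed inside the links of its own two endpoints, occupies a nesting position that constrains those neighboring links; this should power a discharging scheme that transports ``excess degree'' from high-degree vertices to low-degree ones. Making this transport efficient enough to lower the effective average degree from the local $\tfrac23 n$ to the global $\tfrac12 n$ is exactly the gap the authors leave open, and is where I expect essentially all the difficulty to lie. As a hedge I would develop a parallel stability program: show that any $S_2$-free family with at least $\tfrac14 n^2-o(n^2)$ edges admits an approximate interval partition $\Omega_n=A\sqcup B$ under which almost every edge has one of the two construction types, and then perform an exact local optimization on the $O(n)$ ``defect'' edges. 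Combined with the small-case base, the stability route would also pin down the exact constant and the $-1$, while sidestepping a fully uniform min-degree statement.
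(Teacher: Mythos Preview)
The statement you are addressing is Conjecture~\ref{conj:config5}, which the paper explicitly leaves open; the paper only proves the weaker upper bound $\cex(n,S_2)\le \tfrac{23}{64}n^2+O(n)$ of Theorem~\ref{thm:configs2}. So there is no proof in the paper to compare your proposal to, and what you have written is not a proof either---it is a research outline, and you say as much yourself.

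Your preliminary observations are correct: $S_2$ is indeed the nested star, so $H$ is $S_2$-free iff every link $L_v$ contains no two vertex-disjoint nested chords, and such a chord family on $m$ linearly ordered points has at most $2m-3$ members. This gives $|H|\le (2n^2-5n)/3$, which is correct but considerably weaker than the paper's $\tfrac{23}{64}n^2$. (Incidentally, the paper does \emph{not} obtain $\tfrac{23}{64}n^2$ by refining the link-degree argument as you suggest; it uses a completely different decomposition into ``good'' and ``bad'' pairs, a red/blue colouring of $\partial H$, and a planar-matching trick. Your per-vertex approach is already dominated by that method.)

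The genuine gap is that your ``min-degree lemma''---that every extremal $S_2$-free cgh has a vertex of degree at most $\lfloor n/2\rfloor$---is not a lemma at all; it is essentially equivalent to the conjecture, and you provide no mechanism for proving it. You observe correctly that the local link bound $|L_v|\le 2n-5$ cannot force a vertex of degree $\lfloor n/2\rfloor$, and you gesture at a discharging scheme to transport excess degree, but nothing concrete is offered. The stability programme you sketch as a hedge is equally schematic: showing that near-extremal families are close to the $A/B$ construction is itself an open structural statement, and the exact $-1$ would still require a separate argument. In short, you have identified a plausible framework and correctly located where the difficulty lies, but the proposal does not close the gap between $\tfrac{23}{64}n^2$ and $\tfrac14 n^2$; it names it.
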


\begin{thm}\label{thm:configd2}
For $n\geq 3$,
\[ \frac{4}{9} {n \choose 2} - O(n) \leq \cex(n,D_2) \leq \frac{2n^2-3n}{9}.\]
\end{thm}

The lower bound is due to Dam\'asdi and N.~Frankl~\cite{DF} who solved our conjecture from an earlier draft of this paper and determined  $\lim_{n \rightarrow \infty} \cex(n,D_2)/{n \choose 2}$.
Even more, they showed that equality holds for all $n \equiv 6 \mod 9$ and gave an independent proof for our upper bound. 
Beside the upper bound  we present a lower bound
$\frac{3}{7} {n \choose 2} - O(n)\leq \cex(n,D_2)$ in Construction~\ref{d2free} using a quite different method.

\subsection{Summary of results.} We summarise the results for $\cex(n,F)$ in this paper in the following table. For $S_2$ and $D_2$,
we only have bounds on the extremal function, and write $[a,b]$ in the table to denote $a \leq \cex(n,F) \leq b$. We conjecture $\cex(n,S_2) = \lfloor n^2/4\rfloor  - 1$.
The constructions refer to those numbered 1 -- 8 in Section \ref{sec:constructions}.

\begin{center}
\begin{tabular}[c]{|c|c|c||c|c|c|}
\hline
& & & &  &    \\
$F$ &  $\cex(n,F)$ & {\small Construction} & $F$ & Bounds on $\cex(n,F)$ &  {\small Construction} \\ & & &  & & \\ \hline
\hline
\includegraphics[width= 20mm, height= 20mm,trim = {0 -0.1cm 0 -0.1cm}]{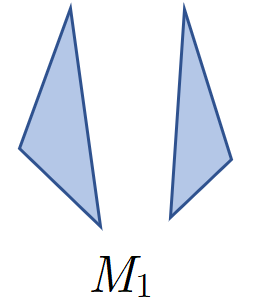}
& \raisebox{0.3in}{$\dotDelta(n)+\frac{n(n-3)}{2}$} & \raisebox{0.3in}{{\small 3}}
&  \includegraphics[width= 20mm, height=20mm,trim = {0 -0.1cm 0 -0.1cm}]{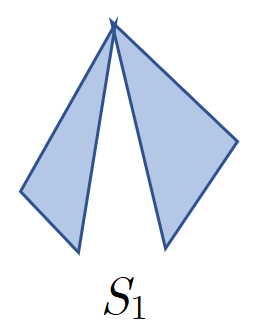}
& \raisebox{0.3in}{$\dotDelta(n) + \lfloor \frac{n}{2} \rfloor \lfloor \frac{n-2}{2}\rfloor$} &   \raisebox{0.3in}{{\small 2}} \\
\hline
\hline
\includegraphics[width= 20mm, height=20mm,trim = {0 -0.1cm 0 -0.1cm}]{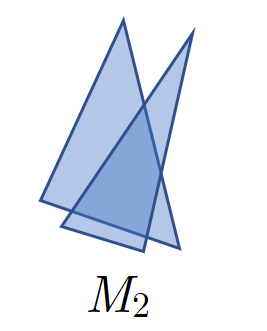}
& \raisebox{0.3in}{$\binom{n}{2}-2$} & \raisebox{0.3in}{{\small 5}}
&  \includegraphics[width= 20mm, height= 20mm,trim = {0 -0.1cm 0 -0.1cm} ]{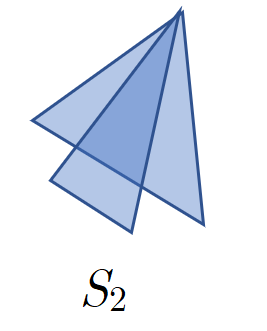}
& \raisebox{0.3in}{$[\lfloor \frac{n^2}{4} \rfloor -1 ,\frac{23n^2}{64}]$}  & \raisebox{0.3in}{{\small 7}} \\
\hline
\hline
\includegraphics[width= 20mm, height=20mm,trim = {0 -0.1cm 0 -0.1cm}]{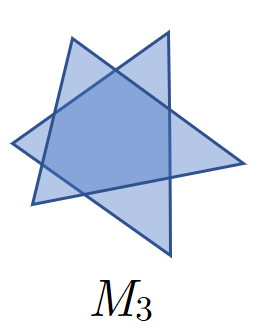}
& \raisebox{0.3in}{$\binom{n}{3} - \binom{n-3}{3}$} & \raisebox{0.3in}{{\small 4}}
&  \includegraphics[width= 20mm, height= 20mm,trim = {0 -0.1cm 0 -0.1cm}]{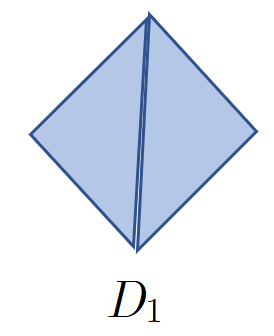}
& \raisebox{0.3in}{$\dotDelta(n)$} &  \raisebox{0.3in}{{\small 1}} \\
\hline
\hline
\includegraphics[width= 20mm, height=20mm,trim = {0 -0.1cm 0 -0.1cm}]{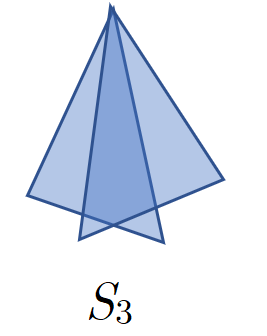}
& \raisebox{0.3in}{$\displaystyle{{\frac{n(n-2)}{2}}\atop{\mbox{for }n\mbox{ even}}}$} & \raisebox{0.3in}{{\small 6}}
&  \includegraphics[width= 20mm, height=20mm,trim = {0 -0.1cm 0 -0.1cm}]{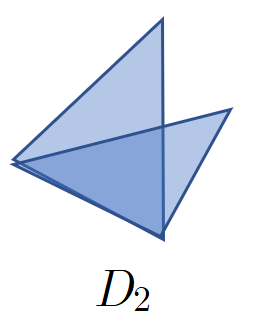}
& \raisebox{0.3in}{
                     $\displaystyle{\frac{2n^2-3n}{9}
                   }\atop{\mbox{for }n \equiv 6 \mod 9}  $} &  \raisebox{0.3in}{{\small 8}} \\
\hline
\end{tabular}
\end{center}

\subsection{Organization.}

Constructions of $F$-free convex triangle systems which give lower bounds for the theorems in this paper are in Section \ref{sec:constructions}, Constructions 1 -- 8. Sections~\ref{sec:proofofthmd1}, \ref{sec:proofofthms1}, \ref{sec:proofofthmm1} contain the proofs of our results for $D_1$, $S_1$ and $M_1$, respectively (i.e. the proofs of Theorems~\ref{thm:all} and~\ref{thm:allplanar}).
Section~\ref{sec:proofofthmm2m3} -- \ref{sec:configd2} contain the proofs of our results concerning the configurations $S_2,S_3,M_2,M_3,D_2$. Concluding remarks and further questions are in Section~\ref{sec:remarks}.

\subsection{Notation.}

We refer to a set of triangles from a set $\Omega_n$ of $n$ vertices of a regular $n$-gon as a {\em convex triangle system}. It is convenient also
to refer to this as a {\em convex geometric hypergraph} or {\em cgh}, where the triangles are considered as triples in ${\Omega_n \choose 3}$,
and the vertices of $\Omega_n$ are cylically ordered in the clockwise direction, say $v_0 < v_1 < \dots < v_{n - 1} < v_0$. In this case,
we consider the subscripts modulo $n$. A cgh $F$ is {\em contained} in a cgh $H$ if there is an injection from $V(F)$ to $V(H)$ preserving the cyclic ordering of the vertices and preserving edges, and we say that $\h$ is {\em $F$-free} if $\h$ does not contain $F$ as a subhypergraph. The extremal function $\cex(n,F)$ denotes the maximum number of edges in an $F$-free cgh on $\Omega_n$. Given $\h \subset \binom{\Omega_n}{3}$ and $A \subseteq \Omega_n$, let $d_\h(A) = |\{e \in H : A \subset e\}|$ be the {\em degree} of $A$ in $\h$; we write $d_H(u,v)$ when $A = \{u,v\}$ and $d_H(v)$ when $A = \{v\}$.
Let $\partial H = \{\{u,v\} : \exists e \in H, \{u,v\} \subset e\}$ denote the {\em shadow} of $H$. For functions $f,g :\mathbb N\rightarrow \mathbb R^+$, we write
 $f = o(g)$ if $\lim_{n \rightarrow \infty} f(n)/g(n) = 0$, and $f = O(g)$ if there is $c > 0$ such that $f(n)\leq cg(n)$ for all $n \in \mathbb N$. If $f = O(g)$ and $g = O(f)$, we write $f =\Theta(g)$.

\section{Constructions}\label{sec:constructions}

\begin{defn}[$D_1, S_1$ and $M_1$-free cghs]\label{1}
\normalfont For $n \geq 3$ odd, let the class of cghs $\mathcal{H}^{\star}(n)$ comprise the single cgh  consisting of triangles which contain in their interior the centroid of $\Omega_n$. For $n \geq 4$ even, each $H \in \mathcal{H}^\star(n)$ consists of all triangles which contain the centroid of $\Omega_n$ and, for each diameter $\{v_i,v_{i+n/2}\}$ of $\Omega_n$, we either add all triangles $\{v_i, v_j, v_{i + n/2}\}$ where $v_i < v_j < v_{i + n/2}$, or all triangles $\{v_i,v_j,v_{i+n/2}\}$ where $v_{i + n/2} < v_j < v_i$. It is not hard to show that each element $H \in \mathcal{H}^{\star}(n)$ has size $\dotDelta(n)$ -- see~\cite{ZF1}. 
Each $H \in \mathcal{H}^{\star}(n)$ is strongly intersecting, so
\begin{equation}\label{eqD1}
\cex(n, D_1) \ge \cex(n, \{D_1, S_1, M_1\}) \ge \dotDelta(n).
\end{equation}
\end{defn}

\begin{defn}[$S_1$ and $M_1$-free cghs]\label{3} \normalfont
 For $n \geq 3$ odd, each cgh in $\mathcal{H}^+(n)$ is obtained by adding for some $i < n$ to any cgh in $\mathcal{H}^{\star}(n)$ all triangles containing a pair $\{v_{i + j},v_{i + j + (n - 1)/2}\}$ for $0 \leq j \leq (n - 3)/2$ (left diagram in Figure \ref{def3}).  For $n \geq 4$ even, each $H \in \mathcal{H}^+(n)$ consists of all
triangles containing the centroid of $\Omega_n$ in their interior or on their boundary (right diagram in Figure \ref{def3}).

\begin{figure}[H]
\centering
\includegraphics[scale=0.4]{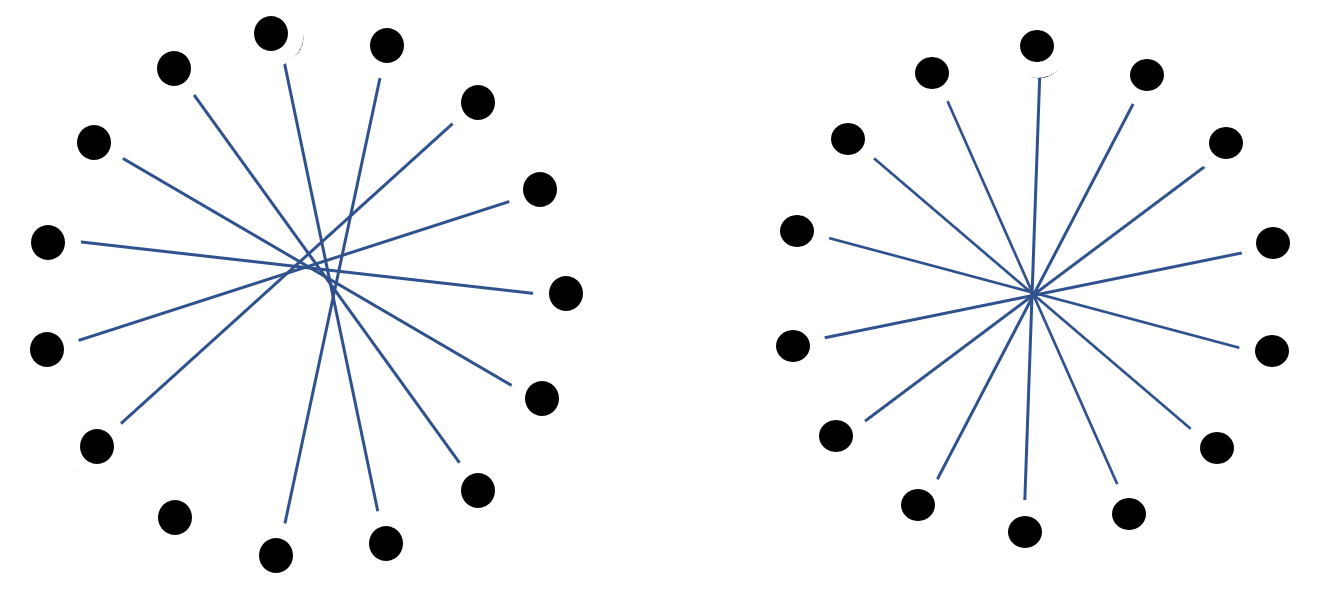}
\caption{Construction of $\mathcal{H}^+(n)$}
\label{def3}
\end{figure}

By inspection, each $\h \in \mathcal{H}^+(n)$  is $S_1$-free and $M_1$-free.
Moreover, if $n$ is odd, then $|H| = \dotDelta(n) + (n - 1)(n - 3)/4$  whereas if $n$ is even, then $|H|=\dotDelta(n) + n(n - 2)/4$.
 We obtain
\begin{equation}\label{eqS1}
\cex(n, S_1) \ge \cex(n, \{S_1, M_1\}) \ge \dotDelta(n) + \lfloor\tfrac{n}{2}\rfloor
\lfloor\tfrac{n-2}{2}\rfloor.
  \end{equation}
\end{defn}

\begin{defn}[$M_1$-free cghs]\label{2} \normalfont
For $n \geq 3$ odd, the unique cgh in $\mathcal{H}^{++}(n)$ is obtained by adding all triangles containing a pair $\{v_i,v_{i + (n - 1)/2}\}$ to the cgh in $\mathcal{H}^\star(n)$ (left diagram in Figure \ref{def2}). For $n \geq 4$ even, $\mathcal{H}^{++}(n)$ is obtained by adding all triangles containing a diameter of $\Omega_n$, plus all triangles
containing a pair from a set of $n/2$ pairwise intersecting pairs of the form $\{v_{i},v_{i+n/2-1}\}$ to any cgh in $\mathcal{H}^\star(n)$  (right diagram in Figure \ref{def2}). Every cgh in $\mathcal{H}^{++}(n)$ is $M_1$-free, and has size $\dotDelta(n) + n(n - 3)/2$.

\begin{figure}[H]
\centering
\includegraphics[scale=0.4]{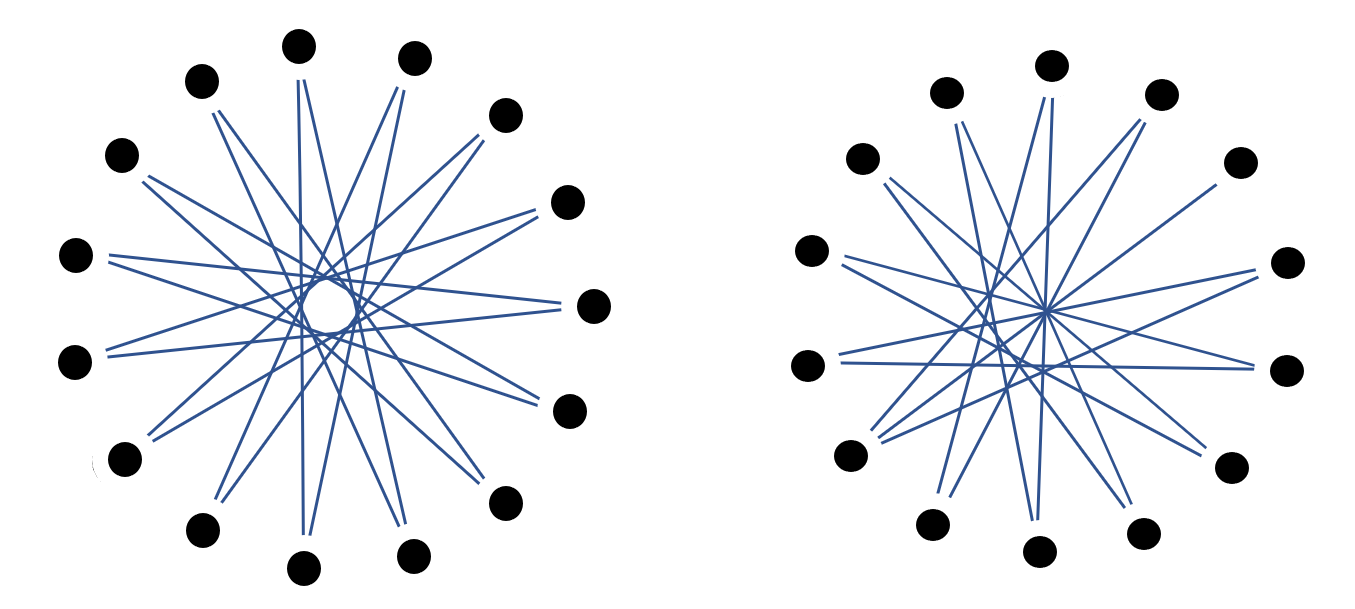}
\caption{Construction of $\mathcal{H}^{++}(n)$}
\label{def2}
\end{figure}

\end{defn}

\begin{defn}[$M_3$-free cghs]\label{m3free} \normalfont
An extremal $M_3$-free construction is simply to take all $n(n - 3)$ triples which contain a pair of cyclically consecutive vertices of $\Omega_n$
plus the set of all  ${n - 4 \choose 2}$ triples without consecutive elements and containing a fixed vertex $v_0$.   It turns out this is not the only $M_3$-free construction with that many edges: we may remove any triple $\{v_0,v_{2k+1},v_{2k+3}\}$ and add $\{v_{2k},v_{2k+2},v_{2k+4}\}$ when $2k + 4 < n$ to obtain many different $M_3$-free extremal constructions.
\end{defn}

\medskip

\begin{defn}[$M_2$-free cghs]\label{m2free} \normalfont
An $M_2$-free construction on $\Omega_n$ is obtained by taking all triples containing a fixed vertex, plus all $n$ triples of three cyclically consecutive vertices.

The restriction $n \geq 7$ is necessary in Theorem \ref{thm:m3m2s3}, since for $n = 6$, the only copies of $M_2$ on $\Omega_6$ are the triples $\{v_0,v_1, v_3\}$, $ \{v_0,v_2, v_3\} $, $\{v_0,v_1, v_4\} $,
$ \{v_0,v_3, v_4\}$, $\{v_0,v_2, v_5\}$, $ \{v_0,v_3, v_5\} $ with their corresponding complements. As such, removing exactly one member from each copy of $M_2$ from the complete cgh on $\Omega_6$ gives an $M_2$-free cgh $H$ with $14 = {n \choose 2} - 1$ triples. It is likely the case that the star plus the set of triples of consecutive vertices in $\Omega_n$ is the unique extremal $M_2$-free example up to isomorphism for $n \geq 8$. For $n=7$, we may take all seven cyclically consecutive triples, the edge $\{v_1,v_3,v_6\}$, and all edges which contain $v_0$ besides the edge $\{v_0,v_4,v_5\}$. Similarly, when $n=7$, we may also take all seven cyclically consecutive triples, the edges $\{v_1,v_3,v_6\}$ and $\{v_1,v_4,v_6\}$, and all edges which contain $v_0$ besides the edges $\{v_0,v_4,v_5\}$ and $\{v_0,v_2,v_3\}$.

\end{defn}

\medskip

\begin{defn}[$S_3$-free cghs]\label{s3free} \normalfont
For even $n \geq 4$, let
\[ \h_0 :=\left\{ \{v_{2i-1},v_{2i},v\} \in \binom{\Omega_n}{3} : 1 \leq i \leq  n/2, v\in \Omega_n\setminus \{ v_{2i-1},v_{2i}\} \right\}. \]
By inspection, $\h_0$ is $S_3$-free and has $n(n - 2)/2$ edges. For $n$ odd, let $\h_1$ have vertex set $\{v_0,v_1,\dots,v_{n-1}\}$ and
add to a copy of $\h_0$ on $\{v_1,v_2,\dots,v_{n - 1}\}$ all triples $\{v_0,v_{2i-1},v_{2i}\}$ where $1 \leq i \leq (n - 1)/2$ as well as
$\{v_{n-1},v_0,v_1\}$. Then $\h_1$ is $S_3$-free and $|\h_1| = (n - 1)(n - 2)/2 + 1$.
\end{defn}

\medskip

\begin{defn}[$S_2$-free cghs]\label{s2free} \normalfont
A construction demonstrating the lower bound is to split $\Omega_n$ into two
intervals $A$ and $B$, and to take all triples which contain a point from $A$ and a pair of consecutive points in $B$.
We also add all triples containing three consecutive points in $B$. This configuration has
$|A|(|B| - 1) + |B| - 2 = (|A| + 1)(|B| - 1) - 1$ triples and does not contain $S_2$. If $|A| = \lceil n/2 \rceil-1$ and $|B| = \lfloor n/2 \rfloor +1 $ then this configuration has
$\lfloor n^2/4 \rfloor - 1$ triples.
\end{defn}

\medskip

\begin{defn}[$D_2$-free cghs]\label{d2free} \normalfont
For a lower bound on $\cex(n, D_2)$,  start with an $S(n,15,2)$ design -- Wilson~\cite{W} proved these exist whenever $n$ is large enough and satisfies the requisite divisibility conditions, i.e., ${n \choose 2}/{15 \choose 2}$ is an integer, and $n \equiv 1 \mod 14$, i.e., $n \equiv 1, 15, 85, 141 \mod 210$.
The construction is as follows: decompose the $E(K_n)$  into  ${n\choose 2}/  {15 \choose 2}$ complete  $K_{15}$'s. Each corresponds to a convex $15$-gon with vertex set $V = \{w_1,w_2,\dots,w_{15}\}$. Decompose each $K_{15}$ into fifteen triangulations of a convex pentagon
  $w_{i}w_{i+1}w_{i+6}w_{i+8}w_{i+11}$ with diagonals $w_{i}w_{i+6}$ and $w_{i}w_{i+8}$ (indices are mod 15).
The lengths of the sides are $1,5,2,3,4$ and the diagonals are $6$ and $7$, so this is indeed a decomposition with $45$ triangles.
   This construction has size exactly
\[ 45 \cdot \frac{{n \choose 2}}{{15 \choose 2}} = \frac{3}{7}{n \choose 2} \]
whenever $n \equiv 1, 15, 85, 141 \mod 210$, and gives a construction of size $\frac{3}{7}{n \choose 2} - O(n)$ for all $n$.
\end{defn}

\section{Proof of Theorem \ref{thm:allplanar}: tangent triangles, $D_1$}\label{sec:proofofthmd1}

A {\em directed triangle} in a tournament is a triangle $\{x,y,z\}$ with $x \to y \to z \to x$.
Let $T(n)$ be the maximum number of directed triangles in an $n$-vertex {\em tournament}. It was shown by
Moon~\cite{Moon} (see also pages 42--44 in Erd\H{o}s and Spencer~\cite{ESp}) that $T(n) = \dotDelta(n)$ for $n \geq 3$. To see this, every tournament with $n$ vertices of outdegrees
$d_1, \ldots, d_n$ has exactly ${n \choose 3} - \sum_{i=1}^n {d_i \choose 2}$ directed triangles. This is maximized (only) when the outdegreees are as equal as possible. If $n$ is odd, then all $d_i=(n-1)/2$ while if $n$ is even, half of the $d_i$ are $(n-2)/2$ and the other half are $n/2$.
These tournaments are called {\em almost regular}.
Tournaments with these outdegrees can easily be constructed and moreover there are plenty of them when $n$ is large. A short calculation gives the required
 $$T(n) = \dotDelta(n).$$

A directed triangle $\{x,y,z\}$ in the plane with $x\to y \to z \to x$ is oriented {\em clockwise} if $z$ is in the half plane to the right when traversing the segment $[xy]$ from $x$ to $y$. If $\{x,y,z\}$ is not oriented clockwise, then it is oriented {\em counterclockwise}.

{\em Proof of Theorem~\ref{thm:d1}}.\quad
Let $P$ be a set of $n$ points in the plane with no three collinear, and let $\TT$ be a $D_1$-free
family of triangles on $P$, and let $H$ be the corresponding $3$-uniform hypergraph with vertex set $P$.
We will prove that $|\TT| \le T(n)$, which gives Theorem~\ref{thm:d1}.

We define an orientation for each pair $\{ x, y \} \in \partial H$ as follows.
Consider any triangle $\{x,y,z\} \in \TT$.
If the orientation of the triangle  $\{x,y,z\}$ is clockwise then orient the edge $\{x,y\}$ as $x\to y$, and
$y\to x$ otherwise. The main observation is that the orientation of $\{x,y\}$ is uniquely determined. If $\{x,y\}$ belongs to an $\{x,y,z\}$ triangle oriented clockwise and to an $\{x,y,z'\}$ triangle oriented counterclockwise, then these two triangles form $D_1$. We conclude that $|H|$ is at most the number of directed triangles in an orientation of a subgraph of $K_n$ which is at most $T(n)$ as required.
\qed

{\em Extremal families}.\quad
The previous proof shows that $|H|=  \dotDelta(n)$ is only possible if the orientation of the edges of $\partial H$ is an almost regular tournament. There is a one to one correspondence between extremal $D_1$-free cghs (or a $D_1$-free triangle system in general) and almost regular tournaments.

\subsection{Extremal $\{ D_1, S_1\}$-free cghs.}\label{subs:41}

Suppose that a cgh $H$ is $D_1$-free and also $S_1$-free with $|H| =\dotDelta (n)$.
Then $\partial H$ is an almost regular tournament and $H$ is obtained as the family of oriented three-cycles
 in $\partial H$.
We claim that more is true, $H\in \mathcal{H}^{\star}(n)$ as described in Construction~\ref{1}.

First we show that all directed triangles in the tournament have the same orientation.
As a first step, we prove that if two triangles in $H$ have some common vertices then
they have the same orientation.
This is obviously true when they have a common edge because $H$ is $D_1$-free.
Consider first the case when two triangles $T_1, T_2\in \TT$ share a vertex $v_1$ and have opposite orientations.
Then they can form an $S_1$ (which we excluded), or an $S_2$, or an $S_3$.

If they form $S_2$, say $v_1< v_2< \dots < v_5< v_1$ and the two triangles are oriented as
$v_1\to v_2 \to v_5\to v_1$ and $v_1\to v_4\to v_3\to v_1$, then we proceed as follows.
Consider the edge $\{v_4,v_5\}$. Observe that $v_5\to v_4$, otherwise the directed triangles $\{v_1,v_4,v_5\}$ and $\{v_1,v_3,v_4\}$ form a $D_1$.
A similar argument shows $v_3\to v_2$.
Consider the edge $\{v_2,v_4\}$. Now $v_2\to v_4$, otherwise the directed triangles $\{v_1,v_2,v_5\}$ and $\{v_2,v_4,v_5\}$ form a $D_1$.
But then we have found a directed triangle $\{v_2,v_4,v_3\}$ which forms an $S_1$ with $\{v_1,v_2,v_5\}$.
So $T_1$ and $T_2$ cannot form an $S_2$.

If $T_1$ and $T_2$ form an $S_3$, say $v_1< \cdots < v_5< v_1$ and the two triangles are oriented as
$v_1\to v_2 \to v_4\to v_1$ and $v_1\to v_5\to v_3\to v_1$ then we proceed the same way.
Consider the edge $\{v_4,v_5\}$. Observe that $v_4\to v_5$, otherwise the directed triangles $\{v_1,v_5,v_4\}$ and $\{v_1,v_2,v_4\}$ form a $D_1$.
A similar argument shows $v_3\to v_2$.
Consider the edge $\{v_3,v_4\}$. Then $v_3\to v_4$, otherwise the directed triangles $\{v_1,v_2,v_4\}$ and $\{v_2,v_4,v_3\}$ form a $D_1$.
But then we have found the directed triangle $\{v_3,v_4,v_5\}$ which forms a $D_1$ with $\{v_1,v_5,v_3\}$.
So $T_1$ and $T_2$ cannot form an $S_3$.

The above argument implies that the vertex sets $X:= \{ x\in e \in H$, $e$ is oriented clockwise$\}$ and
 $Y:= \{ y\in e \in H$, $e$ is oriented counterclockwise$\}$ are disjoint. So every edge $e\in H$ is contained entirely in $X$ or in $Y$. This gives
 \[  |H|\leq  T(|X|) + T(|Y|) < T(n),
 \]
 a contradiction.

From now on, we may suppose that each directed triangle of $\partial H$ is oriented clockwise.
This implies that for $v_i< v_j< v_k < v_i$ we have $v_k \to v_i$ if $v_j\to v_i$. Indeed, each orientation of an edge comes from a directed triangle, so in case of $v_i \to v_k$ and $v_j\to v_i$ we get two triangles $\{v_i,v_k,v_{k'}\}$ and $\{v_j,v_i,v_{j'}\}$ oriented clockwise so
 $v_i< v_{j'}< v_j < v_k < v_{k'}< v_i$, and these two triangles form an $S_1$, a contradiction.
Summarizing, each $v_i$ has out-edges $v_i\to v_{j}$ for $i< j \leq i+ \lfloor (n-1/2)\rfloor$ and
 in-edges $v_j\to v_i$ for $i-\lfloor (n-1/2)\rfloor \leq j < i$, in other words  $H\in \mathcal{H}^{\star}(n)$. \qed

\section{Proof of Theorems~\ref{thm:all} and~\ref{thm:allplanar}: touching triangles, $S_1$}\label{sec:proofofthms1}

\subsection{Proof of Theorem~\ref{thm:all} for $S_1$.}\label{subs:S1cgh}

We will prove that $\cex(n, S_1) \le \cex(n, D_1) + \lfloor n/2\rfloor \lfloor (n-2)/2\rfloor$ which via Theorem \ref{thm:all} for $D_1$
gives the upper bound in Theorem \ref{thm:all} for $S_1$.
For any cgh $\h$, define a graph $G:= G(\h)$ with $G\subset \partial \h$, called the $D_1$-{\em graph of} $\h$ as the set of
$\{u,v\}$ for which there are $x, y \in \Omega_n$ with $u<x<v<y<u$ and triangles $\{u,x,v\}$ and $\{u,v,y\}$ in $H$. In other words, $\{u,v\}$ has triangles on both sides. This definition can be naturally extended to triangle systems $(P, \TT)$.

Let $\h \subset {\Omega_n \choose 3}$ be a cgh containing no copy of $S_1$.
We claim that the $D_1$-graph $G$ is a matching.
Otherwise, if there are $\{u,v\}$ and $\{v,w\}$ in $G$, then there are $x,y \in \Omega_n$ with $u<x<v<y<w$ and triangles $\{u,x,v\}$ and $\{v,y,w\}$ in $H$ which form $S_1$.
We obtain $|G|\le \lfloor n/2\rfloor$. For each $\{u,v\} \in G$, delete all triangles containing $\{u,v\}$ on the side that has fewer triangles (if both sides have the same number of triangles then pick a side arbitrarily). Altogether we delete at most $|G|\lfloor (n-2)/2\rfloor \le
\lfloor n/2\rfloor \lfloor (n-2)/2\rfloor$ triangles.
Let $H'\subset H$ be the set of triangles that remain.
Since $H'$ is $D_1$-free, $|H'|\leq \cex(n, D_1)$, and we are done.

If $H$ is an extremal $S_1$-free cgh, then $|G|= \lfloor n/2\rfloor$ and each edge $\{u,v\} \in G$ is contained in at least $ \lfloor (n-2)/2\rfloor$ triangles $\{u,v,w\}$ with $u< w< v$ and another at least $ \lfloor (n-2)/2\rfloor$ triangles $\{u,v,z\}$ with $u<  v< z$.
This is only possible if the segments representing the edges of $G$ are pairwise crossing each other inside $\Omega_n$.
In case of even $n$ we have that $G$ consists of the $n/2$ diameters $\{v_i,v_{i+n/2}\}$ of $\Omega_n$, in case of odd $n$ we may suppose that $G= \{ \{v_{j},v_{j + (n - 1)/2} \} : 0 \leq j \leq (n - 3)/2 \}$.
Since $H'$ is an extremal $\{ D_1, S_1\}$-free cgh the results of subsection~\ref{subs:41} yield that
$H'\in \mathcal{H}^{\star}(n)$.
The triples from $H\setminus H'$ can be added to $H'$
only as described in Construction~\ref{3}, and this yields $H\in \mathcal{H}^{+}(n)$. \qed


\subsection{A geometric lemma about $D_1$-edges in $S_1$-free triangle systems.}\label{subs:S1_52}

Write $\tria{uvw}$ for the triangle with vertices $u,v,w$. Recall that a segment $[ab]$ (with $a,b\in P$, $a\neq b$) is a $D_1$-edge in the triangle system $(P, \TT)$  if  there are triangles from $\TT$ on both sides, i.e.,
 $\exists c^-,c^+\in P$ such that $c^-$ and $c^+$ are separated by the line $\lina{ab}$ and $\triangle(abc^-), \tria{abc^+} \in   \TT$.

\begin{lemma}\label{lem:521}
Let $\TT$ be a triangle system with point set $P$.
Suppose that  $[ab]$, $[bc]$, and $[cd]$ are distinct $D_1$-segments in $\TT$ (so $a=d$ is not excluded).
Then $\TT$ contains an $S_1$ configuration.
\end{lemma}

\begin{proof}
The lines $\lina {ab}$, $\lina {bc}$, and $\lina {cd}$ cut the plane into seven open regions, unless $\lina {ab} || \lina {cd}$ when we get only six regions. Let $T$ be the triangle these lines enclose (in the case of six regions $T$ is one of the infinite threesided strips).
Let $H(xy)$ denote the open half plane with boundary line $xy$ tangent to $T$ but disjoint from its interior.
Since $ab$ is a $D_1$-edge there exists a triangle $abc^-\in \TT$ where $c^-$ is in the open half plane   $H(ab)$, and there exists a triangle $bcy\in \TT$  
 with $y\in H(bc)$.
These two triangles form an $S_1$ configuration unless $c^-$ and $y\in B:= (H(ab)\cup \lina{ab}) \cap H(bc)$.
Consider a third triangle  $cdb^-\in \TT$ where $b^-\in H(cd)$.
Since this half plane is separated from $bcy$ by $\lina{cd}$ (except both contain $c$ in their boundaries)  $\tria{cdb^-}$  and $\tria{bcy}$ form an $S_1$, and we are done.
\end{proof}

\subsection{A removal lemma concerning  $S_1$-free triangle systems.}\label{subs:S1_53}
We prove Theorem~\ref{thm:allplanar} for $S_1$ in the following stronger form.

\begin{thm}\label{thm:s1_to_d1}
Let $n \geq 3$, and let $\TT$ be an $n$-point triangle system.
If $\TT$ is $S_1$-free then there exists a subfamily $\TT'\subset \TT$ which is $D_1$-free and
\[  |\TT|\leq |\TT'|+
\Big\lfloor\frac{n}{2}\Big\rfloor
\Big\lfloor\frac{n-2}{2}\Big\rfloor. \]
\end{thm}
Since $|\TT'|\leq \dotDelta(n)$ by Theorem~\ref{thm:d1}, one obtains the desired upper bound for $|\TT|$.

Recall that the $D_1$-graph of $\TT$ is a graph $G$ with vertex set $P$ and its edges are the $D_1$-segments.
For $e \in G$, let $\TT(e)$ be the set of triangles from $\TT$ containing $e$, and let $\de{e}$ be the minimum number of triangles $\tria{e \cup \{x\}}\in \TT$ on one side of $\lina{e}$.
Obviously, $\de{e}\leq (1/2)|\TT(e)|\leq \lfloor (n-2)/2 \rfloor$.
We extend this definition for any set of pairs, $\TT(F)$ is the set of triangles from $\TT$ containing a pair
$e\in F$, and $\de{F}$ is the minimum number of triangles $e \cup \{x\}\in \TT$, $e\in F$ such that
 removing those triangles from $\TT$ we eliminate all $D_1$ edges of $F$.
Our aim is to prove that $\de{G}\leq \lfloor n/2\rfloor \lfloor (n-2)/2 \rfloor$.
We also show that for $n\neq 5$ in case of equality $G$ is either a matching of size $\lfloor n/2\rfloor$, or
 a matching of size $(n-3)/2$ and a path of length two. We conjecture that the latter case cannot happen for $n>n_0$.

Since $\TT$ is $S_1$-free, Lemma~\ref{lem:521} implies that $G$ contains no path of length three and in particular $G$ does not contain a cycle. Thus $G$ is a starforest.

\begin{claim}\label{cl:531}
Suppose that $\{e,f\}\subset G$ is a two-edge component of the $D_1$-graph $G$,
Then $\de{e,f}\leq  \lfloor (n-2)/2 \rfloor$.
\end{claim}
\begin{proof}
We have that there exists a $w\in P$, $w:=e \cap f$.
Let $\delta = 1$ if $\tria{e \cup f} \in \TT$, and $\delta = 0$ otherwise.
We assume $e,f$ are as shown in Figure~\ref{fig:s1_new}, i.e.,
 the lines $\lina e$ and $\lina f$ cut the plane into four open regions $A,B,C,D$ ($B$ is disjoint to $e\cup f $, the boundary of $D$
 contains both, etc.).
If any of the dotted triples is in $\TT$, then we get a copy of $S_1$, as we have seen this in the proof of Lemma~\ref{lem:521}.
More formally, we get, e.g., if $e\cup \{ x\}\in \TT$, then either $e\cup \{ x\}=e\cup f$ or
 $x\in B\cup C$.
For $X \in \{A,B,C,D\}$, let $e_X$ be the number of $x \in X$ such that $e \cup \{x\} \in \TT$.
We have $e_A, e_D=0$ and $f_C, f_D=0$.
Observe that $e \cup \{x\},f \cup \{x\} \in \TT$ is not possible for $x \in B$, else we get a $D_1$-edge $[wx]$, contradicting the fact that $\{ e,f\}$ is a component of $G$.
We obtain
\begin{equation}\label{eq:sectors}
f_A + e_B + f_B + e_C \leq |P| - |\{ e\cup f\}|\leq n - 3.
\end{equation}

\begin{figure}[H]
\centering
\includegraphics[scale=0.8]{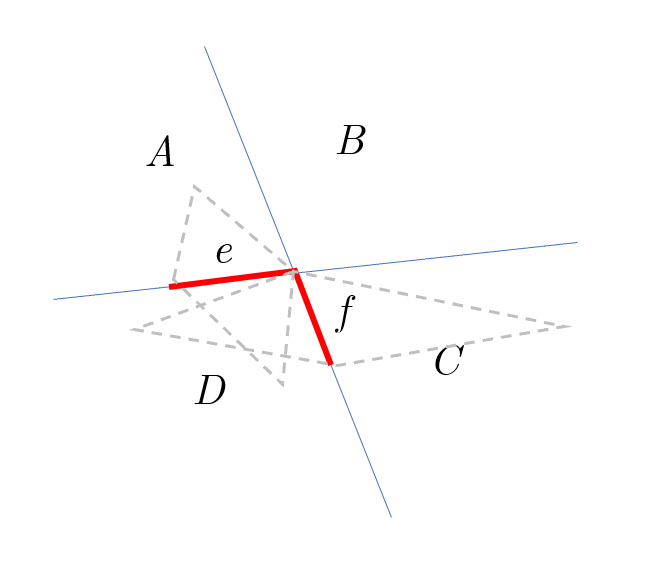}
\caption{A two edge component of $G$ as discussed in Claim~\ref{cl:531} }
\label{fig:s1_new}
\end{figure}

There are four possibilities to delete edges from $\TT$ to make $\{ e,f\}$ non-$D_1$-edges, namely we can eliminate all triangles $e\cup \{ x\}$ with  $x\in A\cup B$ or all such triangles from the other side of $\lina e$, and there are two sides of $\lina f$ as well.
We get four inequalities for $\de{e,f}$.
\begin{eqnarray*}
  \de{e,f} &\leq& e_B+ f_B\\
 \de{e,f}  &\leq& e_B+ (f_A+\delta)\\
  \de{e,f} &\leq & (e_C+\delta)+ f_B\\
  \de{e,f}  &\leq & (e_C+\delta)+ f_A.
\end{eqnarray*}
Summing these and using~\eqref{eq:sectors} we get
$4 \de{e,f} \leq 2n-6 + 3\delta \le 2n-3$. This gives
 $\de{e,f}  \leq \lfloor(2n-3)/4\rfloor = \lfloor(n-2)/2\rfloor$ and we are done.
\end{proof}

\begin{claim}\label{cl:532}
Suppose that $F\subset G$ is a component of the $D_1$-graph $G$, a star with $s\geq 3$ edges.
Then $\de{F}\leq \lfloor (n-1)/2 \rfloor$.
\end{claim}
\begin{proof}
We will prove the stronger statement $|\TT(F)|\leq n-1$.
Suppose that the edges of $F$ are $wv_1$, $wv_2, \dots, wv_s$.
We claim that for any vertex $x\in P\setminus \{ w\}$ an (open) half plane with boundary line $\lina{wx}$ can contain only at most one
 triangle from $\TT$ of the form $wxv_i$. Indeed, if there is another such triangle $wxv_j$ and, say, $\angle (xwv_i) < \angle(xwv_j)$ then there is another vertex $z\in P$ such that $\tria{wv_jz}\in \TT$ and it is separated from $ \tria{wxv_i}$ by the line $\lina{wv_j}$; however this means that $\tria{wv_jz}$ and $ \tria{wxv_i}$ form an $S_1$ configuration.
Even more, if $x\in P\setminus V(F)$, then $[wx]\notin G$ implies that this can happen on at most one side of $\lina{wx}$. We get for such an $x$ that
 $|\{ wv_i: wv_ix\in \TT\} |\leq 1$, hence $|\{ wv_ix: wv_ix\in \TT, 1\leq i\leq s, x\notin V(F)\}|\leq n-1-s$.
To estimate $|\TT(F)|$ it remains to count  the triangles from $\TT$ of the form $wv_iv_j$.
For any given $i$ there are at most two such triangles, and each of them is counted that way exactly twice, so their number is at most $s$.
\end{proof}

\begin{proof}[Proof of Theorem~\ref{thm:s1_to_d1}]
Suppose that $\de{G}\geq \lfloor n/2\rfloor \lfloor (n-2)/2 \rfloor$.
Let the (nontrivial) components of $G$ be $F_1, F_2, \dots, F_r$.
Claims~\ref{cl:531} and~\ref{cl:532} imply that
$\de{G} =\sum \de{F_i} \leq r\lfloor (n-1)/2 \rfloor$.
For $n$ even this leads to $r\geq n/2$; equality holds, $G$ is a perfect matching.
For $n\geq 5$ odd we get $r\geq (n-3)/2$ and in case of $r=(n-3)/2$ we have
  $\de{F_i}=(n-1)/2$ for each $1\leq i\leq r$.
In this latter case again  Claim~\ref{cl:531} implies that each $F_i$ has at least 4 vertices, $r\leq n/4$, a contradiction for $n>5$. So in the odd case (for $n>5$) we must have $r=(n-1)/2$, each component is a single edge except perhaps one is a two-path.
Then Claim~\ref{cl:531} implies that $\de{G}\leq r \lfloor (n-2)/2 \rfloor$, completing the proof.
\end{proof}

\section{Proof of Theorems~\ref{thm:all} and~\ref{thm:allplanar}: two separated triangles, $M_1$}\label{sec:proofofthmm1}

\subsection{Proof of Theorem~\ref{thm:all} for $M_1$.}
	
	We use a method similar to that in~\cite{FHK} to
determine $\cex(n,M_1)$.
We prove that if $H$ is an $n$-vertex
$M_1$-free cgh with $|H|\geq \dotDelta(n) + n(n-3)/2$, then $H \in \mathcal{H}^{++}(n)$. First let $n \geq 3$ be odd. If $H \in \mathcal{H}^{++}(n)$ then we are done,
so we may assume $H$ contains a triangle $T(i,j,k) = \{v_i, v_j, v_k\}$ with
$v_i < v_j < v_k < v_{i + (n - 1)/2}$. Moreover, we may assume that among all such triangles, $T(i,j,k)$ is the triangle where the longest edge $\{v_i,v_k\}$ is as short as possible.  Replace all triangles $T(i,j',k) \in H$ with $i<j'<k$  with all triangles $T(i-1,k+1, l)$ where $j$ and $l$ are on opposite sides of the edge $\{v_i,v_k\}$ as shown in Figure~\ref{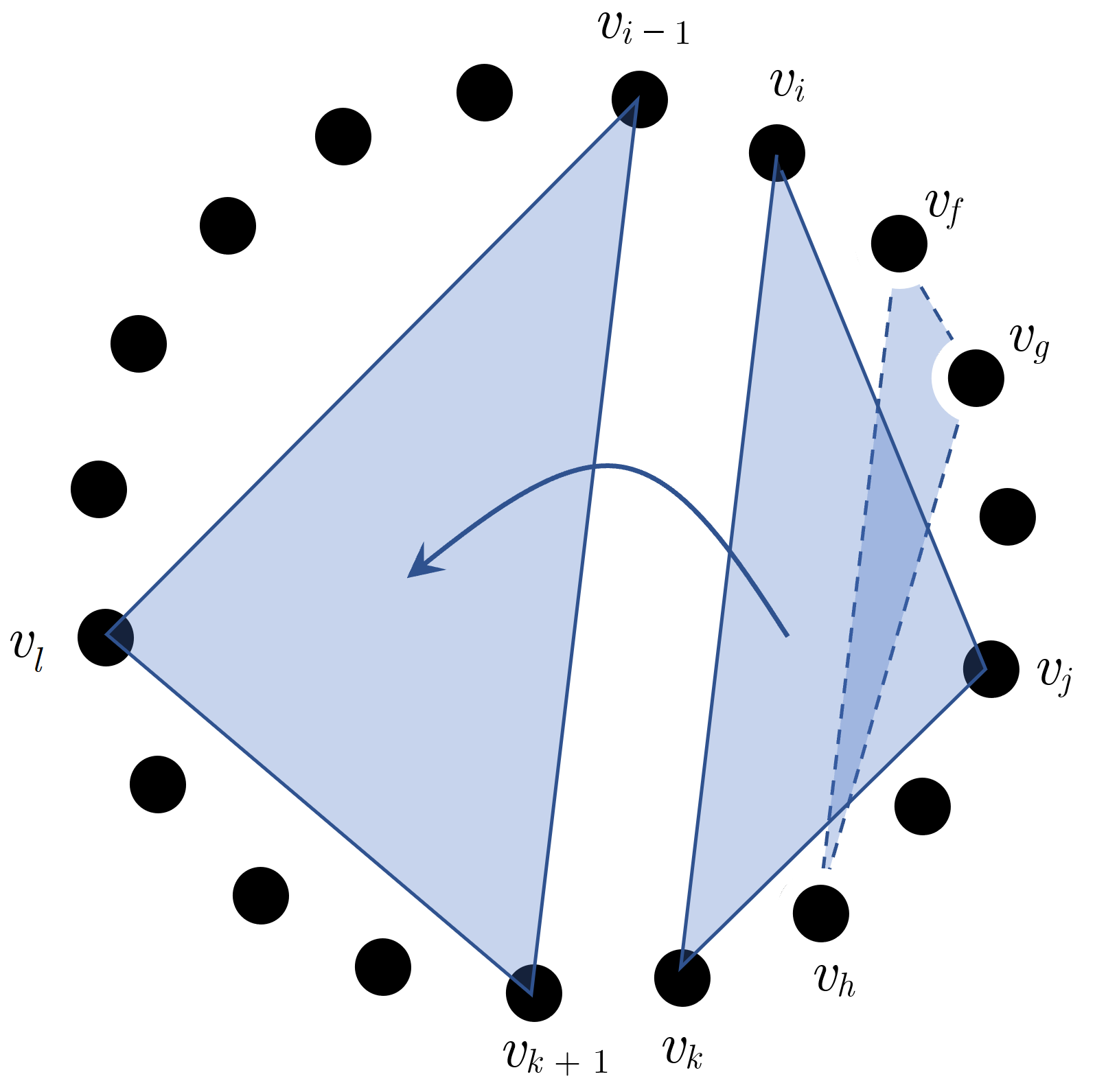}. Since $T(i,j,k)$ and $T(i-1,k+1, l)$ form a copy
of $M_1$, $T(i-1,k+1, l) \not \in H$ for all such $l$. Moreover, since $v_i < v_k < v_{i + (n - 1)/2}$, the number of triangles $T(i-1,k+1, l)$ that we added is greater than the
number of triangles $T(i,j,k)$ that we deleted. Consequently,
this produces a cgh $H'$ with $|H'| > |H|$.  Since $H$ is extremal $M_1$-free, there exists a copy of $M_1$ in $H'$, which must contain a triangle $T(i-1,k+1, l) \in H'$.
 Since all triangles $T(i-1,k+1,l)$ intersect, the other triangle in the copy of $M_1$ must be $T(f,g,h) \in H$. Since $H$ is $M_1$-free, $T(f,g,h)$ intersects $T(i,j,k)$, which implies $v_i \leq v_f < v_g < v_h \leq v_k$ and $\{v_f,v_h\} \neq \{v_i,v_k\}$. However, then the edge $\{v_f ,v_h\}$ is shorter than the edge $\{v_i, v_k\}$, a contradiction.

\begin{figure}[ht]
\centering
\includegraphics[scale=.3] {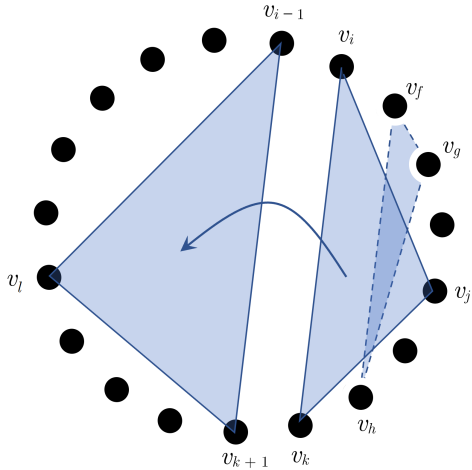}
\caption{Replacing triangles in an $M_1$-free cgh}
\label{m1proof.png}
\end{figure}

Now let $n \geq 4$ be even and let $H$ be an extremal $n$-vertex $M_1$-free cgh. If $H \in \mathcal{H}^{++}(n)$ we are done, so suppose $H \not \in \mathcal{H}^{++}(n)$. If $H$ contains a triangle $T(i,j,k)$ where $v_i < v_j < v_k < v_{i + n/2 - 1}$, then we repeat the same proof as in the case $n$ is odd to derive a contradiction.
Therefore all triangles in  $H$ contain the centroid or are $T(i,j,k)$ with $v_i < v_j < v_k = v_{i + n/2 - 1}$. The pairs $\{v_i,v_{i + n/2 - 1}\}$ for which there exists such a triangle $T(i,j,k)$ must pairwise intersect (possibly at their endpoints) otherwise we find a copy of $M_1$ in $H$. In particular, by definition of Construction~\ref{2}, $H \in \mathcal{H}^{++}(n)$. \qed

Let us note that we can give another proof using the $D_1$-graph and Theorem~\ref{thm:all} for $D_1$ just as we did in subsection~\ref{subs:S1cgh} to prove Theorem~\ref{thm:all} for $S_1$-free convex triangle systems -- specifically, the graph of $D_1$-pairs does not contain two geometrically disjoint pairs.

\subsection{Proof of Theorem~\ref{thm:allplanar} for $M_1$.}\label{sec:proofofthmm1_gen}

We prove Theorem~\ref{thm:allplanar} for $M_1$ in the following stronger form.

\begin{thm}\label{thm:m1_to_d1}
	Let $n \geq 3$, and let $\TT$ be an $n$-point triangle system.
	If $\TT$ is $M_1$-free then there exists a subfamily $\TT'\subset \TT$ which is $D_1$-free and
	\[  |\TT|\leq |\TT'|+  C_V {n \choose 2}. \]
\end{thm}
Since $|\TT'|\leq \dotDelta(n)$ by Theorem~\ref{thm:allplanar} for $D_1$, one obtains the desired upper bound
$|\TT|\leq \dotDelta(n) +O(n^2)$.
Here $C_V>0$ is a constant obtained from Theorem~\ref{th:Valtr} below due to Valtr.

A {\em geometric graph} $(V,E)$ is a graph drawn in the plane so that the vertex set $V$ consists of points in general position and the edge set $E$ consists of straight-line segments between points of $V$.
Two edges of a geometric graph are said to be {\em avoiding}, if they are opposite sides of a
convex quadrilateral.
\begin{thm}[Valtr~\cite{Valtr}]\label{th:Valtr}
	There is a constant $C_V>0$ such that any geometric graph on $m$ vertices with no three pairwise avoiding edges has at most  $C_Vm$ edges.
\end{thm}

Recall that a segment $[ab]$ (with $a,b\in P$, $a\neq b$) is a $D_1$-edge in the triangle system $(P, \TT)$  if  there are triangles from $\TT$ on both sides, i.e.,
$\exists c^-,c^+\in P$ such that $c^-$ and $c^+$ are separated by the line $\lina{ab}$ and the triangles $\tria{abc^-}$ and $\tria{abc^+}\in \TT$.
The set of all such segments is the $D_1$-graph $G$ of $\TT$.
For $v\in P$ let $G_v$ be the $D_1$-{\em link graph} of $\TT$, i.e, it consists of those edges $e$ of $G$, $v\notin e$,  which are contained in a triangle $\tria{e\cup \{ v\}}\in \TT$.
The vertex set of the geometric graph $G_v$ is $P\setminus \{ v\}$, and for every edge $e\in G_v$ we can choose a triangle $\tria{e, -v}\in \TT$ which is separated from the triangle $\tria{e,v}$ by the line $\lina{e}$, so the third vertex
of $\tria{e, -v}$ and $v$ lie on different sides of $\lina{e}$.

\begin{lemma}\label{lem:71}
	Let $\TT$ be a triangle system with point set $P$, and let  the three segments $e$, $f$, and $g$ of $E(G_v)$ be pairwise avoiding.
	Then $\TT$ contains $M_1$.
\end{lemma}

\begin{proof}
	Given a line $\ell$ and a set $X\neq \emptyset$ with $X\cap \ell=\emptyset$ we denote the open half plane with boundary $\ell$ and  containing $X$ by $H(\ell, X)$, the other side is $H(\ell, -X)$.
	Suppose that $\TT$ contains no disjoint triangles.
	Since $e$ and $f$ are on opposite sides of a convex quadrilateral,
	the triangle $\tria{e, -f}\in \TT$ should meet $\tria{f, v}$. This is only possible if
	$v\in H(\lina{e}, -f)$.
	Similarly,  $v\in H(\lina{f}, -e)$, so $v$ is in the  open wedge $H(\lina{e}, -f) \cap H(\lina{f}, -e)$, cf., Figure~\ref{fig:s1_new}.
	For later use denote this wedge by $B(e,f)$.
	Since $v\in B(e,f)$, this rules out that the lines $\ell(e), \ell(f)$ are parallel.

	The line $\lina{ f}$ avoids the other two segments, suppose that it separates them, i.e.,
	$e\subset H(\lina{ f}, -g)$ (and  $g\subset H(\lina{ f}, -e)$).
	Then $B(e,f)\subset H(\lina{ f}, -e)$ and $B(f,g)\subset H(\lina{ f}, -g)= H(\lina{ f}, e)$.
	This implies $B(e,f)\cap B(f,g)=\emptyset$, contradicting to $v\in B(e,f)\cap B(f,g)\cap B(g,e)$.
	Hence $\lina{e}$ is a tangent line of $R:={ \, {\mathrm{conv}} \, ( \{ e,f,g\}) }$, so this convex hull is a hexagon.
	
	There are two cases. If $R$ is inscribed into the triangle $T$ formed by the lines $\lina{ e}$, $\lina{ f}$, and $\lina{ g}$, then
	each region $B(e,f)$, $B(f,g)$, and $B(g,e)$ is a digon (an infinite wedge). These are pairwise disjoint, there is no place for $v$.
	Otherwise, one edge, say $e$ lies on a side of $T$ and $f$ and $g$ lie on the other two sides of the threesided infinite region
	$H(\lina{e},-T)\cap H(\lina{f},g)\cap H(\lina{g},f)$. Then $B(f,g)$ is a digon inside $H(\lina{e}, T)$, and $v\in B$.
	Consider a triangle $\tria{e,x}\in \TT$ where $x\in H(\lina{e},-T)$.
	The two digons in $H(\lina{e},-T)$ are disjoint, so we may suppose that $x\notin (H(\lina{e},-T)\cap H(\lina{f},-T))$.
	Then the triangle  $\tria{e,x}$ is disjoint to $\tria{f,v}$, completing the proof of Lemma~\ref{lem:71}.
\end{proof}

\begin{proof}[Proof of Theorem~\ref{thm:m1_to_d1}]
	Recall that we denote  the set of triangles from $\TT$ containing a pair
	$e\in F$  by $\TT(F)$, and $\de{F}$ is the minimum number of triangles $e \cup \{x\}\in \TT$, $e\in F$ such that
	removing those triangles from $\TT$ we eliminate all $D_1$ edges of $F$.
	Our aim is to prove that $\de{G}\leq  C_V {n \choose 2}$ if $\TT$ is $M_1$-free.
	We will show the slightly stronger statement: $\TT(G)\leq C_V n(n-1)$.
	We have $\TT(G) \leq \sum_{v\in V} |G_v|$.
	By Lemma~\ref{lem:71} the geometric graph $G_v$ has no three pairwise avoiding edges. Then Theorem~\ref{th:Valtr}
	gives $|G_v|\leq C_V(n-1)$. Then $\de{G}\leq (1/2)|\TT(G)|$ completes the proof.
\end{proof}

\section{Proof of Theorem~\ref{thm:m3m2s3}: crossing triangles, $M_3$}\label{sec:proofofthmm2m3}

For the proof of Theorem~\ref{thm:m3m2s3} for $M_3$, it is useful to consider ordered hypergraphs: the vertex set is $\Omega_n = \{v_0,v_1,\dots,v_{n-1}\}$ with the linear ordering $v_0 < v_1 < \dots < v_{n-1}$. Let $\ex_{\to}(n,M_3)$ denote the maximum number of triples in an ordered hypergraph not containing triples
$\{v_i,v_j,v_k\}$ and $\{v_{i'},v_{j'},v_{k'}\}$ with $v_i < v_{i'} < v_j < v_{j'} < v_k < v_{k'}$ -- this is the ordered analog of $M_3$.
The following theorem implies Theorem \ref{thm:m3m2s3} for $M_3$, since $\cex(n,M_3) = \ex_{\to}(n,M_3)$:

\begin{thm}\label{cpthm}
Let $n \geq 7$. Then  $\ex_{\to}(n, M_3)=  {n \choose 3} - {n - 3 \choose 3}$.
\end{thm}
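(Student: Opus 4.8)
The plan is to establish both bounds. The lower bound $\ex_{\to}(n,M_3) \ge \binom{n}{3} - \binom{n-3}{3}$ comes directly from the construction already described in the excerpt: take all triples meeting $\{v_0\}$ together with all triples containing two cyclically consecutive vertices of $\Omega_n$; one checks this family is $M_3$-free and has exactly $\binom{n}{3} - \binom{n-3}{3}$ members (this is the number of triples $\{v_i,v_j,v_k\}$ with $i<j<k$ for which $i=0$, or $j=i+1$, or $k=j+1$). So the real content is the upper bound.

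For the upper bound I would argue by induction on $n$, with the base case handled by direct inspection (the hypothesis $n \ge 7$ suggests small cases may misbehave, so the induction should be anchored at a suitable value and the claim checked for $n=7$). For the inductive step, let $H$ be an $M_3$-free ordered $3$-graph on $\{v_0 < \dots < v_{n-1}\}$ of maximum size. The key quantity to control is the degree $d_H(v_0)$ of the first vertex. I would show $d_H(v_0) \le \binom{n-1}{2} - \binom{n-3}{2} = \binom{n-3}{3} - \binom{n-4}{3}$ wait — more precisely, I want $d_H(v_0) \le$ (the difference $f(n) - f(n-1)$ where $f(n) = \binom{n}{3} - \binom{n-3}{3}$); then removing $v_0$ and applying induction to $H - v_0$ on $n-1$ vertices gives $|H| = d_H(v_0) + |H - v_0| \le (f(n) - f(n-1)) + f(n-1) = f(n)$. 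One computes $f(n) - f(n-1) = \binom{n-1}{2} - \binom{n-4}{2} = 3n - 12$ for the relevant range, so the crux is the bound $d_H(v_0) \le 3n-12$ — actually I should recompute: $f(n)-f(n-1)$ counts triples with $0 \in e$ in the extremal example, which is $\binom{n-1}{2}$ minus those triples through $v_0$ destroyed... rather, the extremal family has $\binom{n-1}{2}$ triples through $v_0$, so I'd instead want to bound the degree of a cleverly chosen vertex, or bound $d_H(v_0)$ directly when $v_0$ is the minimum. Let me restate: since every triple through the minimum vertex $v_0$ automatically has $v_0$ as its smallest element, these never pose a problem, so the bound I actually need is on $d_H(v_{n-1})$ (the maximum vertex) or on a well-chosen interior vertex; the inductive constant is $f(n) - f(n-1) = 3n - 12$ when $n$ is large. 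So the lemma to prove is: \emph{some vertex of $H$ has degree at most $3n-12$ in an $M_3$-free family}, or directly control the last vertex.

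The heart of the argument, and the step I expect to be the main obstacle, is proving that link bound. I would look at the link graph $G_{n-1} = \{\{u,w\} : \{u,w,v_{n-1}\} \in H\}$ on $\{v_0,\dots,v_{n-2}\}$ and show that $M_3$-freeness forces $G_{n-1}$ to avoid a certain ordered graph pattern — specifically, if $\{v_a,v_b\}, \{v_c,v_d\} \in G_{n-1}$ with $v_a < v_c < v_b < v_d < v_{n-1}$, then together with a third edge $\{v_e,v_f\}$ of $G_{n-1}$ nested appropriately we would produce $M_3$; so $G_{n-1}$ is, after removing a bounded number of edges, an ordered graph with no such "3-term crossing chain," which has linearly many edges. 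More carefully, $M_3$-freeness in $H$ restricted to triples through $v_{n-1}$ says: $G_{n-1}$ has no pair of edges $\{v_i,v_j\}, \{v_{i'},v_{j'}\}$ with $v_i < v_{i'} < v_j < v_{j'}$ — i.e. $G_{n-1}$ is \emph{crossing-free} as an ordered graph — because such a pair plus the vertex $v_{n-1}$ beyond both would need... no, two edges only give two triples. So the forbidden configuration in the link must be extracted from triples not all through $v_{n-1}$: a crossing pair in $G_{n-1}$ together with an edge of $H$ lying between them would complete $M_3$. This interplay is delicate and is where I would spend the most care — isolating the precise structural restriction that $M_3$-freeness imposes on the link of the extreme vertex, showing it permits only $O(n)$ edges (with the sharp constant $3n-12$), and verifying the boundary cases where the nesting argument degenerates. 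Once the link bound is nailed down with the right constant, the induction closes immediately and the matching lower-bound construction finishes the proof.
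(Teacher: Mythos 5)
Your lower bound is fine, but your upper bound is a plan rather than a proof, and the plan breaks at exactly the point you yourself flag as ``the main obstacle.'' First, the bookkeeping: $f(n)-f(n-1)=\binom{n-1}{2}-\binom{n-4}{2}=3n-9$, not $3n-12$. More seriously, a delete-one-vertex induction needs the lemma that \emph{every} $M_3$-free ordered triple system on $n$ vertices has a vertex of degree at most $3n-9$, and nothing in your sketch proves this. Your proposed route --- showing that the link graph of $v_{n-1}$ (or of some chosen vertex) is crossing-free --- cannot work even in principle, as you half-notice: two triples sharing a vertex span only five points and so never form $M_3$, hence $M_3$-freeness imposes no constraint at all on any single link graph (the family of all triples through one vertex is $M_3$-free and that vertex has degree $\binom{n-1}{2}$). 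Worse, in the extremal construction every vertex has degree at least $3n-9$, with the typical vertex having degree about $4n$; so the degree bound you need is razor-sharp, holds only for very particular vertices, and must necessarily play triples through the chosen vertex against triples avoiding it. That interplay is the entire content of the theorem, and it is precisely the step your proposal defers.

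The paper's proof sidesteps the min-degree question by contracting rather than deleting. Partition $H$ into: $H_1$, the triples containing both $v_0$ and $v_1$ (at most $n-2$ of them); $H_2$, the triples $e$ with $v_0\in e$, $v_1\notin e$ and $e-v_0+v_1\in H$; and the rest, which after merging $v_0$ with $v_1$ embeds injectively into an $M_3$-free system $H_3$ on $n-1$ vertices. The one structural step is that the link graph $G=\{\{u,w\}:\{v_0,u,w\}\in H_2\}$ on $\{v_2,\dots,v_{n-1}\}$ has no crossing pair: if $\{u,w\},\{x,y\}\in G$ with $u<x<w<y$, then $\{v_0,u,w\}\in H$ and $\{v_1,x,y\}\in H$ (the latter by the definition of $H_2$), and $v_0<v_1<u<x<w<y$ exhibits an ordered $M_3$. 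Hence $G$ is outerplanar, $|H_2|\le 2(n-2)-3=2n-7$, and $|H|\le (n-2)+(2n-7)+f(n-1)=f(n)$ by induction, with the small case checked directly. Note that $(n-2)+(2n-7)=3n-9$ is exactly the increment you were aiming for, but it is obtained from the pair $\{v_0,v_1\}$ via contraction, not from the degree of any single vertex. If you want to rescue your outline you would have to prove the $3n-9$ minimum-degree statement for $M_3$-free systems, which is not obviously true and certainly does not follow from any crossing-freeness property of one link; the contraction of two consecutive vertices together with the outerplanarity of the $H_2$-link is the missing idea.
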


\begin{proof}
Let $H$ be an $M_3$-free ordered triple system with $n$ vertices. Let $H_1$ consists of all $e \in H$ with $v_0,v_1 \in e$, and let
$H_2$ consists of all $e \in H$ with $v_0\in e$, $v_1\not\in e$ and $e \setminus  \{ v_0\} \cup \{ v_1\} \in H$.
Let $H_3$ be obtained from $H \backslash (H_1 \cup H_2)$ by merging the vertices $v_0$ and $v_1$. Note
that $H_3$ is a 3-cgh with $n - 1$ vertices. Clearly, $|H_1| \leq n - 2$. We may form an ordered
graph from $H_2$ by considering $G = \{\{u,v\} : \{v_0,u,v\} \in H_2\}$ -- this is the {\em link graph of $v_0$}
with vertex set $\{v_2,v_3,\dots,v_{n-1}\}$ with the natural ordering. If two edges of $G$ cross --
say $\{u,v\},\{w,x\} \in G$ with $u < w < v < x$, then the triples $\{u,v,v_1\}$ and $\{w,x,v_0\}$ are in $H_2$,
and form a copy of $M_3$, a contradiction. Therefore no two edges of $G$ cross, which implies
$G$ is an outerplane graph with $n - 2$ vertices. Consequently $|G| \leq 2n - 7$, by Euler's Formula.
Finally, it is also straightforward to check $H_3$ is $M_3$-free, so by induction,
\[ |H| = |H_1| + |H_2| + |H_3| \leq (n - 2) + (2n - 7) + {n - 1 \choose 3} - {n - 4 \choose 3} = {n \choose 3} - {n - 3\choose 3}.\]
This completes the proof of Theorem \ref{cpthm}.
\end{proof}

\section{Proof of Theorem \ref{thm:m3m2s3}: stabbing triangles, $M_2$}\label{sec:proofofthmm2m2}

We prove by induction on $n$ that $\cex(n,M_2) = {n \choose 2} - 2$ for $n \geq 7$. When $n=7$, since cyclically consecutive triples $\{v_i,v_{i+1},v_{i+2}\}$ are never in $M_2$, we may assume these seven edges are in any $M_2$-free cgh. For the remaining twenty-eight triples, we create a graph with vertex sets consisting of these triples and form an edge if two of the triples form a copy of $M_2$. A computer aided calculation \cite{SAGE} then yields this graph has independence number $12$ and hence $\cex(7,M_2) = 12 + 7 = \binom{7}{2}-2$.

\medskip

For the induction step, we plan to find two consecutive $u,v \in \Omega_n$ with degree at most three and whose common link graph $G_{u} \cap G_{v}$ has at most $n - 3$ edges. Let $H$ be a maximal $M_2$-free cgh on $\Omega_n$, and $H' \subset H$ be the cgh after removing all consecutive triples $\{v_i,v_{i+1},v_{i+2}\}$. Let $d(v_i,v_j)$ be length of the path on the perimeter of the polygon starting with $v_i$ and moving clockwise to $v_j$. For an edge $e = \{v_i, v_{i+1}, v_k\} \in H'$ -- we only consider such edges -- let $\ell(e) = \min\{d(v_{i + 1},v_k),d(v_k,v_i)\}$.

\begin{lemma}\label{lemma:m2induction}
Let $H \subset \binom{\Omega_n}{3}$ be a maximal $M_2$-free cgh and $H'$ be as above. Then \\
\textup{(1)}  For consecutive $u,v \in \Omega_n$, $|G_{u} \cap G_{v}| \leq n - 3$ with equality only if $G_u \cap G_v$ is a star. \\
\textup{(2)} There exists $v_i \in \Omega_n $ such that the degree of $\{v_{i},v_{i + 1}\}$ is at most three in $H$.
\end{lemma}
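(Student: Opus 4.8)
The plan is to prove parts (1) and (2) separately; (1) is the cleaner of the two, and the crux of the whole lemma is a single residual sub-case of (2).

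For part (1), I would fix consecutive $u=v_i$ and $v=v_{i+1}$ and suppose toward a contradiction that $G_u\cap G_v$ contains two disjoint edges $\{x,y\}$ and $\{x',y'\}$. Since $G_u$ carries no edge at $u$ and $G_v$ none at $v$, all four vertices lie in the arc $A:=\Omega_n\setminus\{u,v\}$, and all four triples $\{u,x,y\},\{v,x,y\},\{u,x',y'\},\{v,x',y'\}$ belong to $H$. On $A$ the two disjoint pairs $\{x,y\},\{x',y'\}$ are ordered in one of three ways: separated, nested, or crossing. The key point is that because $u$ and $v$ are \emph{adjacent} on $\Omega_n$, one of the three arcs of the triangle $\{u,x,y\}$ is empty, so the type ($M_1$, $M_2$, or $M_3$) of a ``cross-pair'' $(\{u,x,y\},\{v,x',y'\})$ or $(\{v,x,y\},\{u,x',y'\})$ is determined solely by where the third vertex of the partner triangle sits relative to $x$ and $y$; running through the three orders shows that in each case one of the two cross-pairs is a copy of $M_2$, contradicting $M_2$-freeness. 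Hence $G_u\cap G_v$ is an intersecting family of edges, so it is a triangle or a star; since $n\ge 7$ a triangle has only $3<n-3$ edges, so $|G_u\cap G_v|\le n-3$, with equality forcing a star.

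For part (2), suppose toward a contradiction that $d_H(\{v_i,v_{i+1}\})\ge 4$ for every $i$. A consecutive triple $\{v_j,v_{j+1},v_{j+2}\}$ together with any disjoint triple has all three vertices of that triple inside one arc (type $M_1$), so no consecutive triple lies in a copy of $M_2$; by maximality of $H$ all $n$ consecutive triples are in $H$, and therefore each consecutive pair lies in at least two triples of $H'':=\{e\in H' : e \text{ contains a consecutive pair}\}$, so $H''\ne\emptyset$. The workhorse is the $M_2$-criterion: for $e=\{v_i,v_{i+1},v_k\}\in H''$ and a triple $f\in H$ disjoint from $e$, the pair $e,f$ is a copy of $M_2$ if and only if $v_k$ separates the three vertices of $f$, i.e.\ $f$ meets both open arcs $(v_{i+1},v_k)$ and $(v_k,v_i)$ (this is the same gap-pattern computation as in part (1), using that the arc $(v_i,v_{i+1})$ is empty). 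Now pick $e^*=\{v_0,v_1,v_k\}\in H''$ minimizing $\ell$, after rotating and reflecting so that $\ell(e^*)=d(v_1,v_k)=k-1\le n-k$, hence $3\le k\le(n+1)/2$. When $k\ge 4$ this closes quickly: the consecutive pair $\{v_{k-2},v_{k-1}\}$ is disjoint from $e^*$, and it lies in no triple of $H''$, because a hypothetical $\{v_{k-2},v_{k-1},v_q\}\in H''$ has $v_q\notin\{v_{k-3},v_k\}$, and if $v_q$ lies in the arc $(v_k,v_0)$ then by the $M_2$-criterion $\{v_{k-2},v_{k-1},v_q\}$ and $e^*$ form $M_2$ (impossible), while if $v_q\in\{v_0,\dots,v_{k-4}\}$ (the only other option) then a short length computation gives $\ell(\{v_{k-2},v_{k-1},v_q\})\le k-2<k-1=\ell(e^*)$, contradicting the minimal choice of $e^*$. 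Hence $d_H(\{v_{k-2},v_{k-1}\})=2<4$, a contradiction.

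The remaining case $k=3$ is the main obstacle: here $\ell(e^*)=2$ and $e^*=\{v_0,v_1,v_3\}$ is a ``near-triple'', and the argument above collapses because the short arc $(v_1,v_3)=\{v_2\}$ contains no consecutive pair disjoint from $e^*$. The lever I would use is the structural consequence of the $M_2$-criterion for $e^*$: any triple of $H$ containing $v_2$ together with two vertices of $\{v_4,\dots,v_{n-1}\}$ would, with $e^*$, form $M_2$, so $\{v_4,\dots,v_{n-1}\}$ is an independent set in the link graph $G_{v_2}$. From there the plan is to keep exploiting the hypothesis $d_H\ge 4$: the pairs $\{v_0,v_1\}$, $\{v_2,v_3\}$, $\{v_3,v_4\}$ then support extra $H''$-triples, and feeding a long triple $\{v_0,v_1,v_m\}$ through $\{v_0,v_1\}$ into the $M_2$-criterion forces every $H''$-triple through a neighbouring consecutive pair to have its third vertex in a short arc; combining this squeeze with the $G_{v_2}$-independence and the symmetric squeeze on the other side eventually forces some consecutive pair to have degree at most $3$, the desired contradiction. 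I expect this last bookkeeping — pinning down exactly which triples can survive around the near-triple — to be the genuinely delicate part of the proof.
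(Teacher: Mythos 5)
Your part (1) is correct and is essentially the paper's argument (no two disjoint edges in $G_u\cap G_v$ via a short case analysis on the three relative orders, then the intersecting-graph structure gives at most $n-3$ edges with equality only for a star), and your treatment of part (2) in the case $k\ge 4$ is also sound and close to the paper's: the triple $\{v_q,v_{k-2},v_{k-1}\}$ either forms $M_2$ with $e^*$ or has $\ell$ strictly smaller than $\ell(e^*)$.

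The genuine gap is the case $\ell(e^*)=2$ (your $k=3$), which you explicitly leave as a plan rather than a proof: the observation that $\{v_4,\dots,v_{n-1}\}$ is independent in the link graph of $v_2$ is correct but does not by itself force any consecutive pair to have degree at most three, and the subsequent ``squeeze'' is not carried out; as written, the argument is incomplete precisely at the step you yourself identify as the delicate one. This unhandled case is an artifact of taking the \emph{global} minimum of $\ell$ over $H''$, which allows the minimizer to be a near-triple. The paper avoids it with two moves. First, it shows that some edge with $\ell\ge 3$ must exist: if every edge of $H'$ through a consecutive pair had $\ell=2$, then the hypothesis that each consecutive pair has degree at least two in $H'$ forces \emph{both} near-triples $\{v_i,v_{i+1},v_{i+3}\}$ and $\{v_{i-2},v_i,v_{i+1}\}$ into $H'$ for every $i$, and then $\{v_0,v_1,v_3\}$ and $\{v_2,v_4,v_5\}$ already form $M_2$. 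Second, it minimizes $\ell$ only over edges with $\ell\ge 3$, and at the pair $\{v_{j-1},v_j\}$ it again uses the degree-at-least-two hypothesis to obtain \emph{two} triples $\{v_h,v_{j-1},v_j\}$ and $\{v_k,v_{j-1},v_j\}$ in $H'$; choosing the one whose third vertex is farther from the pair guarantees an edge with $3\le\ell<\ell(e)$, contradicting minimality. If you adopt this restricted minimization (or, equivalently, split on whether all edges of $H''$ are near-triples), your $k\ge4$ computation completes the proof and the problematic $k=3$ analysis is never needed.
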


\begin{proof}
We first prove (1) by showing $G_{u,v} := G_u \cap G_v$ does not contain a pair of disjoint edges If $\{w,x\},\{y,z\}$ are disjoint edges in $G_{u,v}$, and $v < w < x < y < z < u < v$ or $v < w < y < z < x < u < v$ -- this means that $\{w,x\},\{y,z\}$ do not cross -- then $\{u,w,x\},\{v,y,z\}$ form $M_2$. If on the other hand $v < w < y < x < z < u < v$ -- this means $\{w,x\}, \{y,z\}$ do cross --
then $\{u,y,z\},\{v,w,x\}$ form $M_2$. So $G_{u,v}$ has no pair of consecutive edges. It is a standard fact that the unique extremal graphs with at least four vertices and no pair of disjoint edges are stars, and therefore $G_{u,v}$ has at most $n - 3$ edges.

For (2), seeking a contradiction, suppose every pair of consecutive vertices has degree at least four in $H$ and hence degree at least two in $H'$.
We first show there exists $e \in H'$ with $\ell(e) \geq 3$. If not, then $\{v_i, v_{i + 1}, v_{i + 3}\} \in H'$ and $\{v_{i - 2}, v_i, v_{i + 1}\} \in H'$ for all $i$ and there are no other edges in $H'$. However, then $\{v_0, v_1, v_3\}  \in H'$ and $\{v_2, v_4, v_5\}  \in H'$ form $M_2$, a contradiction. So there exists $e \in H'$ with $\ell(e) \geq 3$.
From all $e \in H'$ with $\ell(e) \geq 3$, pick $e$ so that $\ell(e) = j \geq 3$ is a minimum. Suppose $e = \{v_0,v_1, v_{j + 1}\}$, so $\ell(e) = d(v_1,v_{j + 1})$ (the proof
for $e$ of the form $\{v_{n - j}, v_0, v_1\}$ with $\ell(e) = j = d(v_{n-j},v_0) \geq 3$ will be symmetric). Then the pair $\{v_{j - 1},v_{j}\}$ has degree at least two in $H'$ so there are edges $f = \{v_h,v_{j-1},v_j\}$ and $g = \{v_k,v_{j-1},v_j\}$ in $H'$. If $j + 1 < k \leq n - 1$ or $j + 1 < h \leq n - 1$, then
$f$ and $e$ or $g$ and $e$ respectively form $M_2$, a contradiction. So $0 \leq h,k \leq j - 3$, recalling $\{v_{j-2},v_{j-1},v_j\} \not \in H'$.
Now
\[ \ell(f) = d(v_h,v_{j-1}) > d(v_k,v_{j - 1}) \geq 2\]
and so $\ell(f) \geq 3$. On the other hand, since $0 \leq h < j - 1$,
\[ \ell(f) = d(v_h,v_{j - 1}) < d(v_0,v_j) = \ell(e)\]
contradicting the choice of $e$. This final contradiction proves (2).
\end{proof}

Let $\{v_i,v_{i + 1}\}$ have degree at most three in $\h$, as guaranteed by Lemma \ref{lemma:m2induction} part (2). We contract the pair $\{v_i,v_{i+1}\}$ to a vertex $w$ to get a cgh $H_0$ with $n - 1$ vertices. Let $G = \{\{u,v\} : \{u,v,v_i\},\{u,v,v_{i+1}\} \in H \}$ be the common link graph of $v_i$ and $v_{i + 1}$.

\begin{lemma}\label{lemma:G}
Let $G$ be the common link graph of $v_i$ and $v_{i + 1}$. Then $|G| \leq n - 4$.
\end{lemma}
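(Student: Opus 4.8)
The plan is to combine Lemma \ref{lemma:m2induction}(1) with the maximality of $H$. By Lemma \ref{lemma:m2induction}(1) we already know $|G|\le n-3$. Moreover $G$ lives on the $n-2\ge 6$ vertices $\Omega_n\setminus\{v_i,v_{i+1}\}$, and a graph on at least six vertices with no two disjoint edges and more than three edges must be a star; hence equality $|G|=n-3$ forces $G$ to be the \emph{spanning} star $K_{1,n-3}$ centred at some vertex $v_c$. So it suffices to rule this out, and for the rest I assume $G$ is the spanning star at $v_c$, with the labelling chosen so that $v_{i+1}<v_c<v_i$. Then for every leaf $v_j\ne v_i,v_{i+1},v_c$ we have $\{v_i,v_c,v_j\}\in H$ and $\{v_{i+1},v_c,v_j\}\in H$.

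The first step is a characterisation of when a triple $\{v_i,v_{i+1},v_t\}$ through the pair lies in a copy of $M_2$ inside $H$. Since $v_i,v_{i+1}$ are cyclically consecutive and sit in a common triangle, in any $M_2$ they must occupy a within-triangle consecutive pair, and unwinding the picture of $M_2$ this happens precisely when there is a triple $f\in H$ disjoint from $\{v_i,v_{i+1},v_t\}$ having one vertex in the arc $(v_{i+1},v_t)$ and one vertex in the arc $(v_t,v_i)$ — call such an $f$ a triple that \emph{straddles} $v_t$. Consequently, since $H$ is a maximal $M_2$-free cgh, for every $v_t$ with $\{v_i,v_{i+1},v_t\}\notin H$ there is a triple of $H$ straddling $v_t$.

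The second step shows that, because $G$ is the spanning star at $v_c$, almost nothing can straddle $v_c$. Suppose $f=\{f_1,f_2,f_3\}$ straddles $v_c$, say with $v_{i+1}<f_1<v_c<f_2<f_3<v_i$ (the $2$–$1$ split is symmetric under exchanging $v_i$ and $v_{i+1}$). If the arc $(f_3,v_i)$ is nonempty, choose a leaf $g$ there; then $\{v_{i+1},v_c,g\}\in H$ is disjoint from $f$, and reading off the cyclic order $v_{i+1},f_1,v_c,f_2,f_3,g$ one checks this pair of triples is a copy of $M_2$ — a contradiction. Symmetrically, a leaf $g\in(v_{i+1},f_1)$ makes $\{v_i,v_c,g\}$ and $f$ a copy of $M_2$. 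Hence the only triple that can straddle $v_c$ has the form $\{v_{i+2},v_m,v_{i-1}\}$. Therefore, provided $\{v_i,v_{i+1},v_c\}\notin H$ and $H$ contains no straddling triple of that special form, we may add $\{v_i,v_{i+1},v_c\}$ to $H$ and remain $M_2$-free, contradicting maximality. This settles the lemma for the generic position of $v_c$.

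The residual situations are the main obstacle. These are: (i) $v_c$ is a neighbour of $v_i$ or of $v_{i+1}$, so that $\{v_i,v_{i+1},v_c\}$ is a consecutive triple already in $H$ and the arcs in the previous step degenerate; (ii) $\{v_i,v_{i+1},v_c\}$ happens to be present because $d_H(\{v_i,v_{i+1}\})=3$ and it is the one extra triple; and (iii) $H$ does contain a straddling triple $\{v_{i+2},v_m,v_{i-1}\}$. In case (i) the star being centred adjacent to the pair forces several codegrees $d_H(\cdot,\cdot)$ to be maximal, and one argues separately via the link graph of $v_c$ (or a neighbour of $v_c$) to produce an $M_2$ or an addable triple; here the bound $d_H(\{v_i,v_{i+1}\})\le 3$ from Lemma \ref{lemma:m2induction}(2) is what leaves enough room to choose another target triple through $\{v_i,v_{i+1}\}$. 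In cases (ii) and (iii) one plays the lone special triple off against the star triples and the consecutive triples, now targeting $v_m$ or a vertex adjacent to it, to again build a copy of $M_2$ or to exhibit an addable triple. Working through these cases with the cyclic order is where the real effort lies; the generic argument above is the clean core.
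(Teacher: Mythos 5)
Your opening is correct and is a genuinely different way to start than the paper's: the reduction via Lemma \ref{lemma:m2induction}(1) to the case that $G$ is the spanning star with centre $v_c$, the observation that (since $v_i,v_{i+1}$ are consecutive) a triple $\{v_i,v_{i+1},v_t\}$ creates a copy of $M_2$ with a disjoint $f\in H$ exactly when $f$ has a vertex in each of the arcs $(v_{i+1},v_t)$ and $(v_t,v_i)$, and the step showing that a leaf in an outer arc turns any straddling triple into a copy of $M_2$, so that only triples containing both $v_{i+2}$ and $v_{i-1}$ can straddle $v_c$ --- all of this checks out. (The paper argues differently: it splits on whether $\{v_{i-1},v_i,v_{i+2}\}$ or $\{v_{i-1},v_{i+1},v_{i+2}\}$ lies in $H$ and builds explicit copies of $M_2$ from the star triples, using maximality only for consecutive triples and a degree condition.) The problem is that your argument stops exactly where the lemma becomes nontrivial: the three ``residual situations'' are named but not proved, and they are not routine.

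Concretely: in (ii), $\{v_i,v_{i+1},v_c\}\in H$ is a real possibility (the pair may have degree $3$, the third triple being precisely $\{v_i,v_{i+1},v_c\}$), and then maximality gives you nothing to contradict, so a genuinely new argument is needed. In (iii), your suggestion to ``play the special triple off against the star triples'' can simply fail: take $v_c=v_{i-2}$ and $f=\{v_{i+2},v_{i-3},v_{i-1}\}\in H$; every triple you actually know to be in $H$ that is disjoint from $f$ --- a star triple $\{v_i,v_c,v_j\}$ or $\{v_{i+1},v_c,v_j\}$ with $v_j$ in the arc $(v_{i+2},v_{i-3})$, or a consecutive triple --- forms with $f$ the alternating pattern $M_3$ or the separated pattern $M_1$, never $M_2$; and the alternative of adding a triple ``targeting $v_m$ or a vertex adjacent to it'' requires knowing that no triple in the unknown remainder of $H$ straddles that new target, which you have not established. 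Finally, (i), the centre adjacent to the pair ($v_c\in\{v_{i-1},v_{i+2}\}$), is precisely where the paper does most of its work (bringing in $\{v_{i-1},v_i,v_{i+2}\}$ or its mirror, the degree of the pair $\{v_{i+3},v_{i+4}\}$, and triples such as $\{v_{i+1},v_{i+2},v_{i+5}\}$), and you give no argument for it at all. So what you have is a correct and attractive core, but not a proof of the lemma; the missing cases are the substance, not bookkeeping.
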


\begin{proof}
If neither of $\{v_{i-1},v_i,v_{i+2}\}$ or $\{v_{i-1},v_{i+1},v_{i+2}\}$ is in $H$, then $\{v_{i-1},w,v_{i+2}\} \not \in H_0$ and $|G| \leq n - 4$ follows from Lemma \ref{lemma:m2induction} part (1). So we assume $\{v_{i-1},v_i,v_{i+2}\} \in H$ or $\{v_{i-1},v_{i+1},v_{i+2}\} \in H$.

{\bf Case 1.} $\{v_{i-1},v_i,v_{i+2}\} \in H$. Suppose $G$ is a star with $n - 3$ edges, with center $v_k$.  If $v_k \notin \{v_{i-1}, v_{i+2} \}$, then letting $v_j \notin \{ v_k,v_{i-1},v_i, v_{i+1}, v_{i+2} \}$, it follows that $\{v_i,v_j,v_k\}$ and $\{v_{i-1},v_{i+1},v_{i+2}\}$ form a copy of $M_2$. Hence, we may assume that $v_k= v_{i-1}$ or $v_k= v_{i+2}$. Both of these cases are similar, so consider only the case $v_k=v_{i+2}$. We may assume that $\{ v_{i+3}, v_{i+4} \}$ has degree at least three. Then there is at least one triple which contains $\{ v_{i+3}, v_{i+4} \}$ of the form $\{v,v_{i+3},v_{i+4}\}$. If $v \in \Omega_n$ and $v_{i+4} < v < v_{i+1}$, then $\{v,v_{i+3},v_{i+4}\}$ and $\{v_{i+1},v_{i+2},v_{i+5}\}$ form $M_2$. If $v=v_{i+1}$, then $\{v,v_{i+3},v_{i+4}\}$ and $\{v_{i-1},v_i,v_{i+2}\}$ form $M_2$. So $G$ is not a star
with $n - 3$ edges, and Lemma \ref{lemma:m2induction} part (1) gives $|G| \leq n - 4$.

{\bf Case 2.} $\{v_{i-1},v_{i+1},v_{i+2}\}\in H$. In this case, a symmetric argument to that used for $\{v_{i-1},v_i,v_{i+2}\} \in H$ applies by reversing the orientation of $\Omega_n$.
\end{proof}

To complete the proof of $|H| \leq {n \choose 2} - 2$, we note by inspection that $H_0$ is also $M_2$-free. By induction, $|H_0| \leq {n - 1 \choose 2} - 2$.
By Lemma \ref{lemma:G}, and recalling $d_H(v_i,v_{i + 1}) \leq 3$,
\[ |H| = |H_0| + |G| + d_H(v_i,v_{i + 1}) \leq {n - 1 \choose 2} - 2 + n - 4 + 3 = {n \choose 2} - 2.\]
This proves Theorem \ref{thm:m3m2s3} for $M_2$. \qed

\section{Proof of Theorem \ref{thm:m3m2s3}: crossing triangles sharing a vertex, $S_3$}\label{sec:configs3}

Let $\h \subset \binom{\Omega_n}{3}$ be a $S_3$-free cgh and $G_i$ be the link graph of $v_i$ in $\h$. Let $G_i'$ comprise the edges of $G_i$ which consist of two consecutive vertices in $\Omega_n$, and let $G_i'' = G_i \backslash G_i'$.

\begin{lemma}\label{lemma:noncrossdist2}
Let $\h \subset \binom{\Omega_n}{3}$ be a $S_3$-free cgh. For $0 \leq i \leq n - 1$, $|G_i''| \leq n - 3$.
\end{lemma}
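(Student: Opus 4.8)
The plan is to analyze the graph $G_i''$ of "long" link edges at $v_i$ (those whose endpoints are not cyclically consecutive) and show that $S_3$-freeness forces it to be essentially a caterpillar-like structure on the $n-1$ vertices $\Omega_n \setminus \{v_i\}$, hence with at most $n-3$ edges. Without loss of generality set $i = 0$, so we work with $G_0''$ on the vertex set $\{v_1, v_2, \dots, v_{n-1}\}$ inheriting the linear order $v_1 < v_2 < \dots < v_{n-1}$. Recall that $\{v_0, v_a, v_b\}$ and $\{v_0, v_c, v_d\}$ (with $a < b$, $c < d$, all in $\{1,\dots,n-1\}$) form a copy of $S_3$ precisely when the pairs $\{a,b\}$ and $\{c,d\}$ \emph{interleave}, i.e. $a < c < b < d$ (or the symmetric version). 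So the hypothesis that $\h$ is $S_3$-free says exactly that $G_0$ — and hence its subgraph $G_0''$ — has \emph{no two interleaving edges}: it is an outerplanar-type graph, a set of chords of a convex $(n-1)$-gon that pairwise do not cross in the strict sense.

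Next I would use the extra information that edges of $G_0''$ skip at least one vertex to cut down from the generic bound $2(n-1) - 3$ for non-crossing chord systems to the sharper $n-3$. First, non-crossing implies $|G_0| \le 2(n-1) - 3 = 2n - 5$ by Euler's formula for outerplane graphs, but this also counts the at most $n-1$ "short" edges in $G_0'$; a cleaner route is to argue directly about $G_0''$. The key structural step: among the non-crossing long chords, consider a maximal one — say an edge $\{v_a, v_b\}$ of $G_0''$ with $b - a$ as large as possible (in the linear order), so $a \ge 1$, $b \le n-1$, and $b - a \ge 2$. Because no edge of $G_0$ interleaves $\{v_a,v_b\}$, every other edge of $G_0''$ either lies entirely within $\{v_a, \dots, v_b\}$, entirely within $\{v_1,\dots,v_a\}\cup\{v_b,\dots,v_{n-1}\}$ (the "outside" arc), or is incident to $v_a$ or $v_b$. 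Iterating this decomposition — peeling off a maximal chord and recursing on the nested regions it creates — produces a recursion of the form: the number of long edges on a segment of $s$ consecutive vertices is at most $s - 2$ (a segment of $s$ vertices admits at most $s-3$ internal long chords of a non-crossing family, plus the outer one), which telescopes to the claimed $|G_0''| \le n - 3$. Alternatively, and perhaps more transparently, one shows $G_0''$ is a forest: if $G_0''$ contained a cycle, the non-crossing condition forces the cycle's vertices to be consecutive around the polygon, but then some chord of the cycle would join two vertices adjacent on the perimeter, contradicting membership in $G_0''$; and a forest on $n-1$ vertices has at most $n-2$ edges, after which one more argument (the outermost edge of $G_0''$ cannot be the one "using up" the last vertex because of the consecutive-vertex restriction, or a direct parity/endpoint count) removes the final unit to reach $n-3$.

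The main obstacle I anticipate is getting the bound down to exactly $n-3$ rather than settling for $n-2$: showing $G_0''$ is a non-crossing forest is routine, but the last $-1$ requires genuinely exploiting that the chords skip a vertex, i.e. that $G_0''$ omits all $n-1$ perimeter edges. Concretely, I expect to argue that in any non-crossing forest on the convex polygon $\{v_1,\dots,v_{n-1}\}$ achieving $n-2$ edges, the tree is spanning, and a spanning non-crossing tree on a convex polygon must use at least one perimeter edge $\{v_j, v_{j+1}\}$ (this is a standard fact — a triangulation-type argument or induction on an "ear"). Since such a perimeter edge is forbidden in $G_0''$, we lose at least one edge, giving $|G_0''| \le n-3$. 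I would present the forest argument first, then isolate the "spanning non-crossing tree contains a boundary edge" lemma as the crux, proving it by induction on $n$ via removing a leaf.
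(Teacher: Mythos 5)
Your reduction to the statement that $G_0''$ is a family of pairwise non-crossing chords on $\{v_1,\dots,v_{n-1}\}$ is exactly right, but the way you then extract the bound $n-3$ has a genuine flaw. The fallback argument you call ``more transparent'' is false: $G_0''$ need not be a forest. Take the triangles $\{v_0,v_1,v_3\}$, $\{v_0,v_3,v_5\}$, $\{v_0,v_1,v_5\}$ (for $n\ge 7$): any two of them share \emph{two} vertices, so no copy of $S_3$ arises, yet the link $G_0''$ contains the cycle $\{v_1,v_3\},\{v_3,v_5\},\{v_5,v_1\}$, whose vertices are not consecutive on the polygon and none of whose edges joins consecutive vertices of $\Omega_n$. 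So both the claim ``a cycle forces consecutive vertices'' and the forest bound $n-2$ fail, and the ``spanning non-crossing tree contains a boundary edge'' lemma never gets to act. Your primary route (peel off a longest chord and recurse on segments) can in principle be made to work, but as sketched it is not a proof: when the longest chord is $\{v_1,v_{n-1}\}$ the ``middle'' segment is the whole vertex set and the recursion does not shrink, and the bookkeeping with degenerate one-vertex side segments and with whether the full-span chord is counted is exactly where the claimed telescoping to $n-3$ needs to be checked; you never verify it.

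There is a one-line fix, which is the paper's argument: since every edge of $G_0''$ joins non-consecutive vertices of $\Omega_n$, you may add to $G_0''$ all $n-2$ path edges $\{v_j,v_{j+1}\}$, $1\le j\le n-2$; these are disjoint from $G_0''$ and cross nothing, so the augmented graph is a non-crossing (outerplane) graph on the $n-1$ vertices $\{v_1,\dots,v_{n-1}\}$ and hence has at most $2(n-1)-3=2n-5$ edges. Subtracting the $n-2$ added edges gives $|G_0''|\le n-3$ directly, with no case analysis, no forests, and no spanning-tree lemma. (Note this also explains why only the $n-2$ path edges, and not the closing edge $\{v_1,v_{n-1}\}$, are added: that chord may legitimately belong to $G_0''$.) I recommend replacing your recursion/forest discussion with this augmentation argument.
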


\begin{proof}
The graph $G_i''$ has no pair of crossing edges since $\h$ is $S_3$-free. If we add to $G_i''$ all the $n$ edges $\{v_j,v_{j+1}\}$, we obtain a subdivision (maybe a triangulation) of $\Omega_n$. A triangulation has
$2n-3$ edges. Removing the $n$ added edges gives $|G_i''| \leq n - 3$.
\end{proof}

\begin{lemma}\label{lemma:dist1}
Let $\h \subset \binom{\Omega_n}{3}$ be a $S_3$-free cgh. For each $i$  $|G_{i}'| + |G_{i+1}'| \leq n$.
\end{lemma}

\begin{proof}
We may assume $i = 0$. Let $G$ denote the multigraph obtained by superimposing the graphs $G_0'$ and $G_1'$, so $|G| = |G_0'| + |G_1'|$. Each component $C$ of $G$ is a path $P$ with some edges of multiplicity two. If $\{v_{j-1},v_j\} \in P \cap G_0'$, then $\{v_j,v_{j+1}\} \not \in P \cap G_1'$, otherwise $\{v_0,v_j,v_{j+1}\},\{v_1,v_{j-1},v_j\}$ form $S_3 \subset \h$ as in Figure~\ref{s3config}, a contradiction. If all edges of $P$ are from $G_1'$ only, then $|C| = |P| = |V(C)| - 1$.
Otherwise, let $\{v_{j},v_{j+1} \}$ be the first edge  of $P$ in $G_0'$ in the clockwise direction. Then all edges of $P$ preceding $\{v_j,v_{j+1}\}$ are in $G_1'$ only,
and all edges of $P$ after $\{v_j,v_{j+1}\}$ are in $G_0'$ only, whereas $\{v_j,v_{j+1}\}$ might be in both $G_0'$ and in $G_1'$. Therefore at most one edge of $P$ has multiplicity two, and $|C| \leq |P| + 1 = |V(C)|$. If $C_1,C_2,\dots,C_r$ are the components of $G$, we conclude
$|G| = |C_1| + |C_2| + \dots + |C_r| \leq |V(C_1)| + |V(C_2)| + \dots + |V(C_r)| = |V(G)| = n$.
\end{proof}

\begin{figure}[H]
\centering
\includegraphics[scale=0.5]{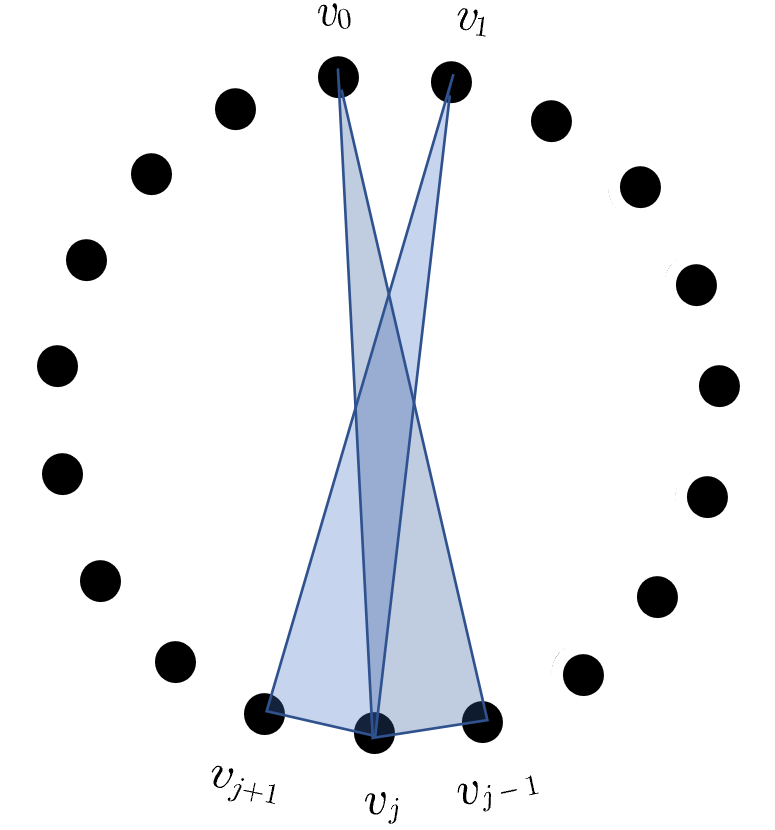}
\caption{Crossing triangles in the proof of Lemma~\ref{lemma:dist1}}
\label{s3config}
\end{figure}

We now complete the proof of $\cex(n,S_3) \leq n(n - 2)/2$, using the following identity:
\[
3|\h| = \sum_i (|G_i'| + |G_i''|) = \sum_i \frac{1}{2}(|G_{i}'| + |G_{i+1}'|) + \sum_i |G_i''|.
\]
We apply Lemmas \ref{lemma:noncrossdist2} and \ref{lemma:dist1} to each term in the sums to obtain:
\[
3|\h| \leq  \sum_{i = 0}^{n-1} \frac{1}{2}n + \sum_{i = 0}^{n - 1} (n - 3) =  \frac{1}{2}n^2 + n(n - 3) = \frac{3}{2}n(n - 2).  \qed
\]

\section{Proof of Theorem \ref{thm:m3m2s3}: touching triangles with parallel sides, $S_2$}\label{sec:configs2}

Let $\h \subset \binom{\Omega_n}{3}$ be an $S_2$-free cgh. We are going to show $|H| \leq 23n^2/64$. Consider an edge $e = \{v_i,v_j,v_k\} \in \h$ where $v_i < v_j < v_k$. We call the pair $\{v_i,v_j\}$ {\em good for $e$} if there does not exists a $k'$ such that $v_j < v_{k'} < v_k$ and $\{v_i,v_j,v_{k'}\} \in \h$, and {\em bad} otherwise.

\begin{lemma}\label{good}
Let $\h \subset \binom{\Omega_n}{3}$ be an $S_2$-free cgh. Then \\
\textup{(1)} Every edge of $\h$ contains at least two good pairs. \\
\textup{(2)} Every pair in $\partial \h$ is good for either one or two edges of $\h$.
\end{lemma}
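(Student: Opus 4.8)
The plan is to prove the two parts independently: part (2) is immediate from the definition and uses nothing about $S_2$, while part (1) is the only place the hypothesis enters.

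For (2), I would fix a pair $\{u,w\}\in\partial\h$ and let $P$ and $Q$ be the two open arcs of $\Omega_n$ cut off by $u$ and $w$. Every edge of $\h$ through $\{u,w\}$ has its third vertex in $P$ or in $Q$. Unravelling the definition, among all edges $\{u,w,z\}$ with $z$ on a fixed one of the two arcs, the pair $\{u,w\}$ is good for exactly one of them, namely the one whose third vertex is innermost (closest to the appropriate endpoint of that arc), since any other such edge is witnessed to be bad by that innermost vertex. Hence $\{u,w\}$ is good for at most one edge per arc, so at most two in all; and since $\{u,w\}\in\partial\h$ at least one of the arcs contains a third vertex, so $\{u,w\}$ is good for at least one edge. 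That gives (2).

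For (1), I would argue by contradiction: suppose some $e=\{v_i,v_j,v_k\}\in\h$ with $v_i<v_j<v_k$ has two bad pairs. The three pairs of $e$ are permuted cyclically by relabelling $v_i\mapsto v_j\mapsto v_k\mapsto v_i$ and the good/bad status of a pair does not depend on this labelling, so I may assume the two bad pairs are $\{v_i,v_j\}$ and $\{v_j,v_k\}$ (the other two cases are handled identically). Badness of $\{v_i,v_j\}$ yields an edge $f_1=\{v_i,v_j,v_x\}$ with $v_j<v_x<v_k$, and badness of $\{v_j,v_k\}$ yields an edge $f_2=\{v_j,v_k,v_y\}$ with $v_k<v_y<v_i$. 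From the cyclic chain $v_j<v_x<v_k<v_y<v_i$ the five vertices $v_i,v_j,v_k,v_x,v_y$ are distinct, $f_1\cap f_2=\{v_j\}$, and read clockwise from the common vertex $v_j$ they occur in the order $v_j,v_x,v_k,v_y,v_i$ with $f_1=\{v_j,v_x,v_i\}$ and $f_2=\{v_j,v_k,v_y\}$. That is precisely a copy of $S_2$ in $\h$, contradicting $S_2$-freeness; hence every edge has at most one bad pair, i.e. at least two good pairs.

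The argument is essentially bookkeeping, so I do not anticipate a real obstacle. The steps needing care are all in part (1): that ``bad'' really produces a third vertex strictly interior to the relevant arc (immediate from the definition), that the arc inequalities force the five vertices to be pairwise distinct with $f_1$ and $f_2$ meeting only in $v_j$, and — the one step with genuine content — recognising that the cyclic pattern $v_j,v_x,v_k,v_y,v_i$ with edges $\{v_j,v_x,v_i\}$ and $\{v_j,v_k,v_y\}$ is the configuration $S_2$ (the ``nested'' star) and not $S_1$ or $S_3$.
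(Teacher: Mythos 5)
Your proof is correct and follows essentially the same route as the paper: part (2) by observing that on each of the two arcs determined by the pair only the edge with innermost third vertex can be good, and part (1) by taking two bad pairs (WLOG $\{v_i,v_j\}$ and $\{v_j,v_k\}$ after a cyclic relabelling, which the paper leaves implicit) and noting that the witnessing edges $\{v_i,v_j,v_x\}$ and $\{v_j,v_k,v_y\}$ form the nested configuration $S_2$. Your identification of the pattern $v_j<v_x<v_k<v_y<v_i$ with $S_2$ agrees with how the paper uses $S_2$ elsewhere, so there is no gap.
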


\begin{proof}
We first prove (1). Suppose $e = \{v_i,v_j,v_k\} \in \h$ and $\{v_i,v_j\}$ and $\{v_j,v_k\}$ are bad. Then there exist $k': v_j < v_{k'} < v_k$ and $i' : v_k < v_{i'} < v_i$ such that $\{v_i,v_j,v_{k'}\}, \{v_j,v_k,v_{i'}\} \in \h$. However, the edges $\{v_{i'},v_j,v_k\}$ and $\{v_i,v_j,v_{k'}\}$ form configuration $S_2$, a contradiction.

For (2), given $\{v_i,v_j\} \in \partial H$, consider an edge $\{v_i,v_j,v_k\}$ with $v_i < v_j < v_k$ and $v_k$ as close as possible to $v_j$; this determines $v_k$ uniquely. Similarly, for $\{v_i,v_j\} \in \partial H$, consider an edge $\{v_i,v_j,v_k\}$ with $v_i < v_k < v_j$ with $v_k$ as close as possible to $v_i$; this too determines $v_k$ uniquely. Therefore each pair in $\partial \h$ is good for either one of two edges of $\h$.
\end{proof}

Color a pair in $\partial \h$ {\em blue} if it is good for exactly one edge in $\h$, and {\em red} if it is good for exactly two edges in $\h$. Let $R$ be the number of red pairs and $B$ the number of blue pairs -- for a red pair $\{u,v\}$, there exist vertices $w,x \in \Omega_n$ on opposite sides of $\{u,v\}$ such that $\{u,v,w\} \in H$ and $\{u,v,x\} \in H$, so red pairs are what we have referred to as $D_1$-pairs in this paper.  If we map an edge $e \in \h$ to the pairs in $e$ that are good for $e$, then each red pair is counted twice and each blue pair is counted once. On the other hand, each edge of $\h$ contains at least two good pairs, by Lemma \ref{good}, so $2|H| \leq 2R + B$. In particular,
\[ |H| \leq R + B/2 \leq R + B = |\partial \h|.\]

\begin{lemma}\label{redtri}
If $\{v_i,v_j\}$, $\{v_j,v_k\}$ and $\{v_k,v_i\}$ are red pairs, then $\{v_i,v_j,v_k\} \in H$.
\end{lemma}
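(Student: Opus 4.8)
The plan is to argue by contradiction, so suppose $\{v_i,v_j\},\{v_j,v_k\},\{v_k,v_i\}$ are all red but $\{v_i,v_j,v_k\}\notin\h$, and relabel so that $v_i<v_j<v_k$ in the clockwise order. Since a red pair has an edge through it with apex on \emph{each} side of its chord, each of the three pairs has an edge through it whose apex lies on the arc containing the third reference vertex: there are $w_1,w_2,w_3$ with $\{v_i,v_j,w_1\},\{v_j,v_k,w_2\},\{v_k,v_i,w_3\}\in\h$, where $w_1\in\{v_{j+1},\dots,v_{i-1}\}$, $w_2\in\{v_{k+1},\dots,v_{j-1}\}$ and $w_3\in\{v_{i+1},\dots,v_{k-1}\}$. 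Since $\{v_i,v_j,v_k\}\notin\h$ we have $w_1\neq v_k$, $w_2\neq v_i$, $w_3\neq v_j$, so $w_1$ lies strictly in the arc $(v_j,v_k)$ or strictly in $(v_k,v_i)$, and similarly for $w_2$ and $w_3$ — eight cases in all.

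The heart of the argument is to show, in every case, that two of the three witness edges form a copy of $S_2$. Recall that $S_2$ consists of two triangles meeting in exactly one vertex $v$, say $T_1=\{v,p,q\}$ and $T_2=\{v,r,s\}$, with $r$ and $s$ both on the arc determined by the chord $\{p,q\}$ that does not contain $v$; equivalently, among the five vertices involved one triangle is a set of three cyclically consecutive vertices while the other consists of the middle one of those three together with the other two. I would prove two such statements directly. \emph{First}: if $w_1\in(v_j,v_k)$ and $w_2\in(v_k,v_i)$, then $\{v_i,v_j,w_1\}$ and $\{v_j,v_k,w_2\}$ form $S_2$ — the five points occur clockwise as $v_i,v_j,w_1,v_k,w_2$, so $\{v_i,v_j,w_1\}$ is three consecutive with middle $v_j$ and $\{v_j,v_k,w_2\}$ is $v_j$ together with $v_k,w_2$; this is exactly the configuration already produced in the proof of Lemma~\ref{good}(1). \emph{Second}: if $w_1\in(v_k,v_i)$ and $w_3\in(v_j,v_k)$, then $\{v_i,v_j,w_1\}$ and $\{v_k,v_i,w_3\}$ form $S_2$ — the five points occur clockwise as $v_i,v_j,w_3,v_k,w_1$, so $\{v_i,v_j,w_1\}$ is three consecutive (namely $w_1,v_i,v_j$) with middle $v_i$, and $\{v_k,v_i,w_3\}$ is $v_i$ together with $w_3,v_k$. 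In both statements the two triangles share only the asserted vertex since the relevant arcs are disjoint, so the copy of $S_2$ is genuine, contradicting that $\h$ is $S_2$-free.

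The hypotheses and conclusion are invariant under the cyclic relabelling $v_i\mapsto v_j\mapsto v_k\mapsto v_i$, under which $(w_1,w_2,w_3)$ cycles and the three arcs $(v_j,v_k),(v_k,v_i),(v_i,v_j)$ cycle accordingly; hence the two statements above hold in all three rotated forms, and a short check shows these six instances cover all eight cases. Concretely: if $w_1\in(v_j,v_k)$ then one of "$w_2\in(v_k,v_i)$", "$w_3\in(v_i,v_j)$" holds (killing the case by the first statement or a rotation of it), or else $w_2\in(v_i,v_j)$ and $w_3\in(v_j,v_k)$, which is a rotation of the second statement; if instead $w_1\in(v_k,v_i)$ then one of "$w_3\in(v_j,v_k)$", "$w_2\in(v_i,v_j)$" holds (killing the case by the second statement or a rotation of it), or else $w_2\in(v_k,v_i)$ and $w_3\in(v_i,v_j)$, which is a rotation of the first statement. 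In every case we obtain a copy of $S_2$, a contradiction.

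The only real work is the position bookkeeping — writing down the clockwise order of the five relevant vertices and matching it to the $S_2$ pattern — together with verifying exhaustiveness of the case split. The subtlest case is the "fully inward" one, $w_1\in(v_k,v_i)$, $w_2\in(v_i,v_j)$, $w_3\in(v_j,v_k)$ read cyclically, which is not caught by the naive pairing of the "obvious" two witnesses at their shared reference vertex but is handled by a rotated instance of the second statement; I expect getting that case analysis clean and complete to be the main obstacle.
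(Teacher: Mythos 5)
Your proof is correct and is essentially the paper's own argument: both take, for each of the three red pairs, a witness edge whose apex lies on the arc containing the third vertex (possible since the triple itself is missing), and then locate two of these witness edges in the nested position that forms $S_2$, via a case analysis on which sub-arc each apex falls into. The only difference is organizational --- you package the analysis as two pairing statements plus cyclic relabelling covering eight cases, while the paper runs a sequential case split and disposes of the mirrored case by reversing the orientation of $\Omega_n$.
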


\begin{proof}
Suppose $\{v_i,v_j,v_k\} \not \in \h$ and $v_i < v_j < v_k$. Then by definition there exists $k' \neq k$ such that $\{v_i,v_j,v_{k'}\} \in \h$
and $v_j < v_{k'} < v_i$. We consider two cases.

\medskip

{\bf Case 1.} $v_j < v_{k'} < v_k$. There exists $i' \neq i$ such that $\{v_{i'},v_j,v_k\} \in \h$ and $v_k < v_{i'} < v_j$.
We observe $v_i < v_{i'} < v_j$, otherwise
$\{v_j,v_{k'}\}$ and $\{v_i,v_{i'}\}$ are non-crossing, and $\{v_i,v_j,v_{k'}\}$ and $\{v_{i'},v_j,v_k\}$ form $S_2$ in $\h$. Now there exists $j' \neq j$ such that $\{v_i,v_{j'},v_k\} \in \h$ and $v_i < v_{j'} < v_k$. If $v_i < v_{j'} < v_j$, then the pairs $\{v_{j'},v_k\}$ and $\{v_j,v_{k'}\}$ are non-crossing, and $\{v_i,v_j,v_{k'}\}$ and $\{v_i,v_j,v_{k'}\}$ form $S_2$. If $v_j < v_{j'} < v_k$, then $\{v_{i'},v_j\}$ and $\{v_i,v_j\}$ are ``parallel'', and $\{v_{i'},v_j,v_k\}$ and $\{v_i,v_j,v_{k'}\}$ form $S_2$ in $\h$.

\medskip

{\bf Case 2.} $v_k < v_{k'} < v_i$. Consider the reverse ordering of $\Omega_n$ and apply the proof of Case 1.
\end{proof}

By Lemma~\ref{redtri}, every triangle of red pairs is an edge of $\h$, so there are at most $|H| \leq |\partial H| \leq {n \choose 2}$ such triangles. In particular, the number of red pairs is at most $n^2/4 + n/2$ -- one could use a precise result by Lov\'{a}sz-Simonovits \cite{LS} to deduce this. Instead we give a direct proof:
the number of triangles in any graph $G$ is at least
\[ \sum_{\{u,v\} \in E(G)} (d(u) + d(v) - n).\]
If $G$ has average degree $d$, then this is precisely
\[ \sum_{u} d(u)^2 - \frac{1}{2}dn^2 \geq d^2 n - \frac{1}{2}dn^2.\]
Since the graph $G$ of red pairs in $\partial H$ has at most $|H| \leq {n \choose 2}$ triangles,
\[ d^2 n - \frac{1}{2}dn^2 \leq \frac{1}{2}n^2\]
which gives $d \leq n/2 + 1$ and therefore $R=|G| \leq n^2/4 + n/2$.
Therefore
\[ 2|H| \leq 2R + B \leq {n\choose 2} + (\frac{n^2}{4} + \frac{n}{2} )= \frac{3n^2}{4}. \]

To improve this bound to the desired $|H| \leq 23n^2/64$, we may assume $n$ is odd and partition the complete graph on $\Omega_n$ into planar matchings $M_1,M_2,\dots,M_n$ where $M_i = \{\{v_j,v_k\} : j + k \equiv i \mod n\}$. Then there exists $i \leq n$ such that at least $R/n$ pairs in $M = M_i$ are red. For each pair of red pairs, say $\{u,v\}$ and $\{w,x\}$, where $u < w < x < v < u$, there exist triples $\{u,v,y\},\{w,x,z\} \in H$ where $u < w < z < x < v < y < u$. Now by inspection, the pair $\{y,z\}$ cannot be contained in any edge of $H$ without creating configuration
$S_2$ -- see Figure \ref{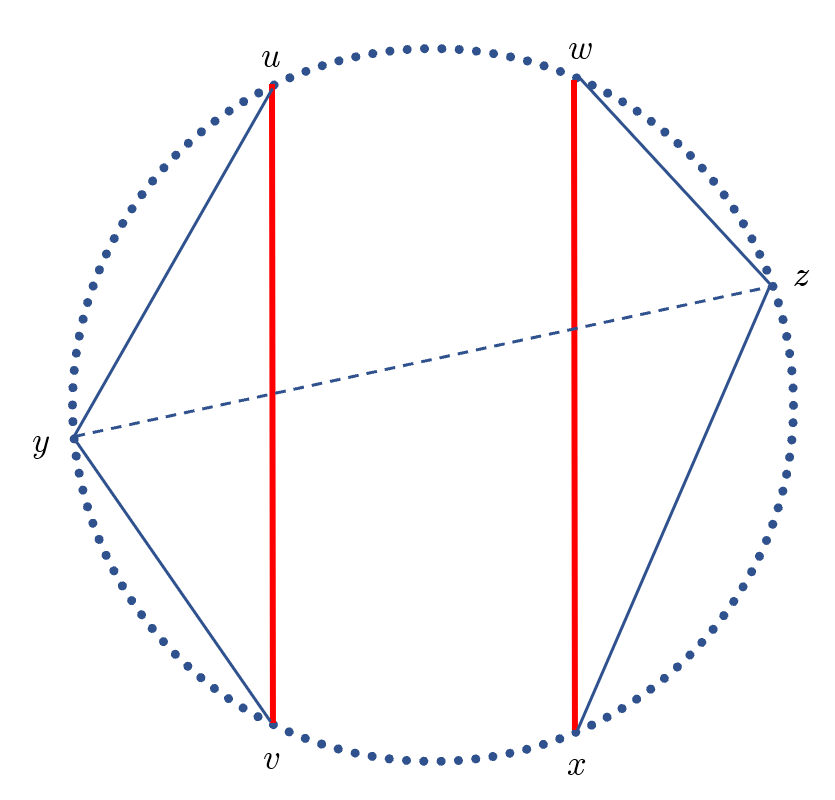}. Furthermore, if $\{u',v'\},\{w',x'\}\in M$, then $\{u',v',y\}$ and $\{w',x',z\}$ cannot both be edges of $H$ without creating $S_2$. Therefore for each pair $\{\{u,v\},\{w,x\}\}$ of red edges of $M$, we may associate a unique pair $\{y,z\}$ which is not contained in any edge of $H$.
Consequently
\[ 2|H| \leq 2R + B \leq 2R + {n \choose 2} - {R/n \choose 2} - R \leq R + {n \choose 2} - {R/n \choose 2}.\]
Since $R \leq n^2/4 + n/2$, this implies $|H| \leq 23n^2/64 - n/4 + 3/8$. As $n \geq 3$, this is at most $23n^2/64$, as required. \qed

 \begin{figure}[ht]
\centering
\includegraphics[scale=0.35] {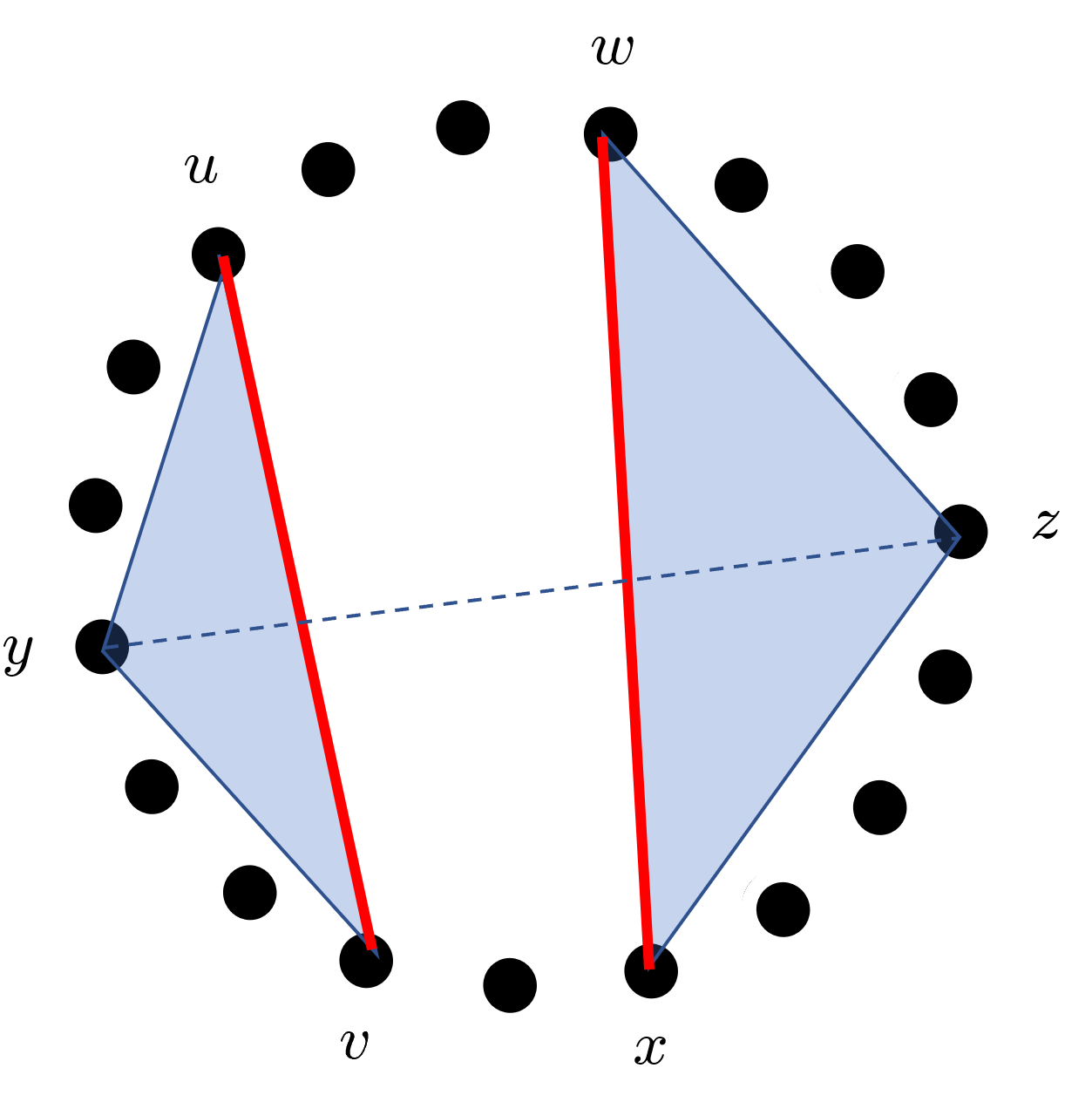}
\caption{Pair $\{y,z\}$ absent from $\partial H$}
\label{proofs2.png}
\end{figure}

\section{Proof of Theorem \ref{thm:configd2}: triangles sharing a side, $D_2$} \label{sec:configd2}

We first observe some simple bounds on $\cex(n,D_2)$. If $G$ is a convex geometric graph that is a triangulation of a convex polygon, then the family $T(G)$ of vertex sets of the triangular regions in $G$ form a $D_2$-free cgh.
By Euler's Formula, $|T(G)|< \frac{1}{2} |G|$, so if
 $G_1,G_2,\dots,G_M$ are edge-disjoint triangulations of polygons with vertices from $\Omega_n$, then $H = T(G_1) \cup T(G_2) \cup \dots \cup T(G_M)$ is a $D_2$-free cgh on $\Omega_n$.
Each  $D_2$-free cgh $H$ can be obtained in this way, so we get $\cex(n, D_2) < (1/2){n \choose 2}$.
 On the other hand, every Steiner triple system induces a  $D_2$-free cgh, we get
 $ \cex(n,D_2) \ge  \frac{1}{3} {n \choose 2} - O(n)$. Construction~8 improves this to $\frac{3}{7}{n \choose 2} - O(n)$,
 and Dam\'asdi and N.~Frankl~\cite{DF} showed $\cex(n,D_2) \geq \frac{2n^2-3n}{9}$ for all $n \equiv 6 \mod 9$
 by a different method.
Here we prove the upper bound $\cex(n,D_2) \leq \frac{2n^2-3n}{9}$ for all $n$.

For the calculation below we need a simple proposition
which can be shown by standard high school calculus.
If $h, x\geq 0$ are reals, $n\geq 3$ is an integer and $h\geq (2n-3)/9$,  then
\begin{equation}\label{eq101}
(h+2x)(h+2x+1)\leq 2xn \quad \Longrightarrow \quad x\geq \frac{n+3}{18}.
  \end{equation}

Another elementary proposition is the following statement:
Suppose that $A$ is a {\em multiset} of positive integers such that the multiplicity of each entry is at most $n$, then
\begin{equation}\label{eq102}   \sum_{a\in A} a \geq \frac{|A|(|A|+n)}{2n}.
  \end{equation}

 \medskip
For the upper bound on $\cex(n, D_2)$, let $\h \subset \binom{\Omega_n}{3}$ be a $D_2$-free cgh.
The graph $\partial H$ has a (unique) edge-disjoint decomposition into triangulations $G_1, \dots, G_M$ as follows.
Make a graph $C$ with vertex set $H$: two triangles of $H$ are joined by an edge of $C$ if they share a side.
Consider the partition of $C$ generated by the components $C_1, C_2, \ldots C_M$ of $G$, where $|C_i|=k_i$.
Each $C_i$ corresponds to a hypergraph $H_i\subset H$ of triangles. Since $H_i$ is $D_2$-free each $G_i:=\partial H_i$ forms a triangulation of a convex  $(k_i+2)$-gon $P_i$ with $k_i-1$  diagonals, $T(G_i)=H_i$, $|E(G_i)|=2k_i+1$.
Let $A_i$ be the multiset of integers  consisting of the side lengths of $P_i$,
$|A_i|=k_i+2$.  We have
\begin{equation}\label{eq103} \sum_{a\in A_i} a \leq n\end{equation}
and here equality holds if the polygon $P_i$ contains the center of $\Omega_n$.
Let $A$ be the multiset $\cup_{i\leq M} A_i$. 
Since each edge of $\partial H$ appears in exactly one $G_i$ and there are $n$ (or $n/2$ or $0$)
 diagonals of $\Omega_n$ of a given length we obtain that $A$ is a multiset with maximum multiplicities at most $n$.
Moreover, $|A|= \sum_i (k_i+2)=|H|+2M$, so~\eqref{eq102} and~\eqref{eq103} yield
\begin{equation}\label{eq104}  \frac{(|H|+2M)(|H|+2M+n)}{2n} \leq \sum_{a\in A} a =\sum_{i\leq M} (\sum_{a\in A_i} a)\leq Mn.
  \end{equation}
Suppose that $|H|\geq (2n^2-3n)/9$. Define $h,x$ as $h:=|H|/n$ and $x:=M/ n $.
Then $h\geq (2n-3)/9$ and~\eqref{eq104} and~\eqref{eq101} imply $x\geq  (n+3)/18$.
However
\[ 2|H|+M =  \sum_{1\leq i\leq M} (2k_i+1) = \sum |E(G_i)|=|\partial H|\leq {n\choose 2}.
  \]
Hence $|H|\leq \frac{1}{2}({n\choose 2}-xn)\leq  (2n^2-3n)/9$. \qed

\section{Concluding Remarks}\label{sec:remarks}

$\bullet$ In this paper, we considered  convex geometric configurations consisting of two triples. One may consider analogous problems
for $r$-tuples: for instance, how many edges can a convex geometric $n$-vertex $r$-graph have if it does not contain two hyperedges which are geometrically
disjoint as $r$-gons (this is the $r$-uniform analog of $M_1$)? This problem was posed explicitly by P.~Frankl, Holmsen and Kupavskii~\cite{FHK}:

\begin{problem}
Find analogues of our results for other classes of sets such as convex $r$-gons in $\mathbb R^2$.
\end{problem}

A family of convex $r$-gons in the plane is {\em strongly intersecting} if any two of the members share a point in their interior.
The maximum size of a strongly intersecting family of $r$-gons is obtained from the obvious extensions of Construction~\ref{1}.
Consider the family of all $r$-gons containing the centroid of $\Omega_n$ when $n$ is odd, together with, for each diameter $\ell$, all $r$-gons which have a side equal to $\ell$ and which lie on one side of $\ell$.  Letting $\dotDelta_r(n)$ denote the size of these families, it is not hard to see
\[ \dotDelta_r(n) = {n \choose r} - n{(n - 1)/2 \choose r - 1}\]
if $n$ is odd, and $\dotDelta_r(n)$ can be computed similarly if $n$ is even.
In particular, $\dotDelta_r(n) = (1 - r/2^{r-1}){n \choose r} + O(n^{r-1})$ for each $r \geq 3$.

\begin{thm}\label{th:13}
The maximum size of a strongly intersecting family of $r$-gons from $\Omega_n$ is $\dotDelta_r(n)$.
\end{thm}

\begin{proof} (Sketch).
We proceed in a similar way to the proof of Theorem \ref{thm:all} for $M_1$.
Consider any $r$-gon $\{v_{i_1},v_{i_2},\dots,v_{i_r}\}$ in $H$
with $v_{i_1} < v_{i_2} < \dots < v_{i_r} < v_{i_1}$ and where the longest side $\{v_{i_1},v_{i_r}\}$ is as short as possible, and replace all such $r$-gons
with $\{v_{i_1},v_{j_2},\dots,v_{j_{r-1}},v_{i_r}\}$ where $v_{i_r} < v_{j_2} < v_{j_3} < \dots < v_{j_{r-1}} < v_{i_1}$. Since the number of choices
of $j_2,j_3,\dots,j_{r-1}$ is always at least the number of choices of $i_2,i_3,\dots,i_{r-1}$, this new $r$-cgh $H'$ has $|H'| \geq |H|$. So we repeat
until $H'$ consists of all $r$-gons containing the centroid of $\Omega_n$ when $n$ is odd, or $n$ is even and $H'$ consists of all $r$-gons containing
the centroid plus for each diameter $\ell$ all $r$-gons which have a side equal to $\ell$ and which lie on one side of $\ell$.
\end{proof}

\medskip

$\bullet$ Since there are many other possible configurations of two $r$-gons, or two ordered $r$-tuples, we did not discuss
these problems in this paper. Some special cases were studied in~\cite{FJKMV1}: for instance, if $F$ consists of two $r$-tuples $\{u_1,u_2,\dots,u_r\}$ and $\{v_1,v_2,\dots,v_r\}$
where $u_1 < v_1 < u_2 < v_2 < \dots < u_r < v_r < u_1$, then it was shown in~\cite{FJKMV1} that for $n > r > 1$,
\[ \cex(n,F) = {n \choose r} - {n-r \choose r}.\]
This may be viewed as a geometric or ordered version of the Erd\H{o}s-Ko-Rado Theorem~\cite{EKR}.

\medskip

$\bullet$ In the cases of $M_2, M_3$ and $S_3$ (see Figure \ref{allconfig}), we obtained exact results for the extremal functions in convex geometric hypergraphs / convex triangle systems (for $n$ even in the case of $S_3$). Our proofs, with more work, should give a characterization of the extremal examples as well. For $M_2$, one requires $n \geq 8$ for the extremal configuration to be unique, as verified by computer. For $S_3$, we believe that $\cex(n,S_3) = (n - 1)(n - 2)/2 + 1$ when $n$ is odd, but do not have a proof,
and we also do not know the characterization of extremal $S_3$-free convex triangle systems (this is the content of Problem~\ref{prob:s3}).

\medskip

$\bullet$
It is likely the case that most of our theorems hold equally for {\em ordered hypergraphs}, where the vertex set is linearly ordered,
but we did not work out the details except for the obvious case $M_3$ (see the first paragraph in Section~\ref{sec:proofofthmm2m3}). The case
of $S_2$ stands out, since the ordered extremal number is not the same as the convex geometric extremal number. The ordered construction
would be to take all triples $\{v_i,v_{i + 1},v_j\}$ from an ordered vertex set $\{v_0,v_1,\dots,v_{n-1}\}$ where $i \geq 0$ and $i + 1 < j \leq n - 1$.

Extremal problems for matchings in ordered graphs connect to enumeration of permutations~\cite{MT} and these have also been extended to hypergraphs~\cite{KM}.

\medskip

$\bullet$ A hypergraph $H$ is {\em linear} if for distinct hyperedges $e,f \in E(H)$, $|e \cap f| \leq 1$.
The extremal functions for the configurations in this paper in the context of linear cghs were determined in~\cite{ADMOS} up to constant
factors for all the configurations except $S_2$. Specifically, if $\cex^*(n,F)$ is the maximum number of triples in an $n$-vertex $F$-free
linear cgh, then Aronov, Dujmovi\'c, Morin, Ooms and da Silveira~\cite{ADMOS} proved $\cex^*(n,M_2) = \Theta(n)$, whereas
if $F \in \{M_1,M_3,S_1,S_3\}$, $\cex^*(n,F) = \Theta(n^2)$. It would be interesting to determine the exact extremal functions in each case.
The problem of determining $\cex^*(n,S_2)$ appears to be very difficult, as it is connected
to monotone matrices, tripod packing, and 2-comparable sets  -- see Aronov, Dujmovi\'c, Morin, Ooms and da Silveira~\cite{ADMOS} for details.
The best bounds are $\cex^*(n,S_2) = \Omega(n^{1.546})$ due to Gowers and Long~\cite{GL} and $\cex^*(n,S_2) = n^2/\exp(\Omega(\log^*n))$ due to the best bounds on
the removal lemma by Fox~\cite{Fox}.

\medskip

$\bullet$
By a result of  Boros and F\"uredi~\cite{ZF33}, for every $n$-point set $P$ (no three on a line) one can find a point on the plane which is contained in at least $n^3/27 -O(n^2)$ triangles with these vertices; and Bukh,  Matou\v{s}ek, and Nivasch~\cite{BMN} gave an example that the coefficient $1/27$ is the best possible. It would be interesting to determine the largest subsystem of pairwise intersecting triangles in this construction.

\medskip

$\bullet$
One can further relax the conditions on the point sets to allow {\em all} planar $n$-point sets.  We conjecture that our upper bounds
in Theorem~\ref{thm:allplanar}
hold for all planar $n$-point sets (when we only count the proper triangles with non-empty interiors). Surely in that case one has to relax the definition of configurations (like, e.g., Ackerman, Nitzan, and Pinchasi~\cite{ANP} did about avoiding pairs of edges).

\medskip

$\bullet$
We have not considered $F$-free  triangle systems $(P,\TT)$ where the point set $P$ is not necessarily in convex position and    $F \in \{M_2,M_3,S_2,S_3,D_2\}$.
The reason is, unlike in the case $F\in \{ D_1, S_1, M_1\}$, there are many different ways to extend the definitions of these configurations and these can lead to many different problems.
E.g., if one insists that no triangle in $F$ contains another vertex of $F$ then the answer is always at least $n^3/27 +O(n^2)$ as it is shown by the following example
$P:=X\cup Y \cup Z$, $\TT:=\{ xyz: x\in X, y\in Y, z\in Z \}$ and $X:=\{ (i, 10^{-i}): 1\leq i\leq n/3\}$, $Y:=\{ (10^{-i}, i): 1\leq i\leq n/3\}$, and
$Z:=\{ (-i, -i+10^{-i}): 1\leq i\leq n/3\}$.
It is a rich area with full of problems, e.g., it would be interesting to determine all configurations $F$ satisfying that $|\TT| \leq (1+(o(1)) \cex(n,F)$
holds for $F$-free triangle systems.


\end{document}